\newtheorem{theorem}{Theorem}
\newtheorem{lemma}[theorem]{Lemma}
\newtheorem{proposition}[theorem]{Proposition}
\theoremstyle{definition}
\newtheorem{remark}[theorem]{Remark}
\newtheorem{claim}[theorem]{Claim}
\newcommand{\Field}{\mathcal{F}}
\newcommand{\ppp}{\pi}
\newcommand{\one}{\boldsymbol{1}}
\newcommand{\var}{\mathbb{V}\mathrm{ar}}
\newcommand{\estgrad}[1]{\widehat{\nabla_{#1}P}}
\newcommand{\gradif}[1]{\widetilde{\nabla_{#1}^{c}p}}
\newcommand{\tapp}{\widehat{\theta}}
\newcommand{\Uad}{U^{\mathrm{ad}}}
\newcommand{\EE}{\mathbb{E}}
\newcommand{\II}{\mathbb{I}_{\RR^{+}}}
\newcommand{\PP}{\mathbb{P}}
\newcommand{\UU}{\mathcal{U}}
\newcommand{\RR}{\mathbb{R}}
\newcommand{\NN}{\mathbb{N}}
\newcommand{\scal}[2]{\left\langle#1,#2\right\rangle}
\title{Stochastic Programming with Probability Constraints}
\author{Laetitia Andrieu\thanks{EDF R\&{}D, D\'{e}p.~OSIRIS,1 avenue du G\'{e}n\'{e}ral de Gaulle, 92141 Clamart Cedex, France}, Guy Cohen\thanks{CERMICS-ENPC, 6--8 avenue Blaise Pascal, 77455 Marne la VallŽe Cedex 2, France, \texttt{guy.cohen@mail.enpc.fr}},  Felisa J. V\'{a}zquez-Abad\thanks{Dept.~Math.~\&~Stat., University of Melbourne, 3010 Victoria, Australia}}
\date{July 31, 2007}
\begin{document}

\maketitle
\begin{abstract}
In this work we study optimization problems subject to a failure constraint. This constraint is expressed in terms of a condition that causes failure, representing a physical or technical breakdown. We formulate the problem in terms of a probability constraint, where the level of ``confidence'' is a modelling parameter and has the interpretation that the probability of failure should not exceed that level.  Application of the stochastic Arrow-Hurwicz algorithm poses two difficulties: one is structural and arises from the lack of convexity of the probability constraint, and the other is the estimation of the gradient of the probability constraint. We develop two gradient estimators with decreasing bias via a convolution method and a finite difference technique, respectively, and we provide a full analysis of convergence of the algorithms. Convergence results are used to tune the parameters of the numerical algorithms in order to achieve best convergence rates, and numerical results are included via an example of application in finance. 
\end{abstract}

\begin{description}
  \item[Keywords.] 
Probability constraints, stochastic programming, stochastic gradient algorithm, stochastic approximation
\end{description}

\thispagestyle{empty}
\section{Introduction}
\subsection{Constrained Optimization in a Stochastic Setting}
Optimization Theory provides  a convenient approach to formulate and solve problems involving conflicting objectives, which is generally the challenge present in decision making situations.  The main idea is to aggregate  as many objectives as possible into a single objective function, which may be straightforward when those objectives are amenable to an expression into a common unit, say, a currency unit as dollar or euro. In this objective aggregation, weights are allocated to each term in order to reflect preferences or priorities. However, there might be other objectives that can hardly be expressed in a unit commensurable with the previous ones (examples to come hereafter). In such a case, it is better to introduce those other objectives through constraints, that is, each such objective should not exceed a prescribed level. The constraint levels are set a  priori, as are the weights for the different terms in the cost function. 

Duality Theory provides the tools to evaluate the sensitivity of the optimal solution (cost) to those prescribed constraint levels. In mathematical terms, let $u$ be the decision variable in a Hilbert space \(\UU\), $J:\UU\to\RR$ the cost function, and \(\Theta:\UU\to\RR^{d}\) the constraint function.  We consider problems of the type:
\begin{equation}
\label{eq-pb}
\min_{u\in\Uad} J(u)\quad \text{s.t.}\quad \Theta(u)\leq \alpha\;,
\end{equation}
where \(\Uad\) is an ``admissible'' or ``feasible'' closed convex subset of \(\UU\) and inequalities in the constraints involving \(\Theta\) are understood componentwise.
Introduce the multiplier \(\lambda\) (in \(\RR^{d}_{+}\))
and the Lagrangian
\begin{equation}
\label{eq-lag}
L(u,\lambda)= J(u)+\scal{\lambda}{\Theta(u)-\alpha}\;,
\end{equation}
where \(\scal{\cdot}{\cdot}\) denotes the scalar product. Kuhn-Tucker optimality conditions characterize an optimal multiplier  \(\lambda^{\sharp}\) which can be interpreted as the sensitivity of the optimal cost function \(J(u^{\sharp})\) (where \(u^{\sharp}\) denotes the solution of problem~\eqref{eq-pb}) with respect to \(\alpha\) (up to a change of sign).

When random factors  affect the outcomes of a decision, a
classical approach is to assume that the probability distribution of those
factors is known and to appeal
to \emph{stochastic} optimization. Call \(\xi\) the corresponding random variable, then the objective function is usually expressed in terms of an expectation of some cost function
of the form \(J(u)=\EE \big( j(u,\xi) \big )\). 

In the stochastic situation, modelling choices for aggregation of objectives, weights and constraints are similar to the deterministic case. However a new question also arises regarding constraints, namely, constraints can be formulated in various ways: ``almost surely'', ``in expectation'', ``in probability'', etc.

The first possibility (``almost sure'' constraints) means that certain quantities~\(\theta(u,\xi)\) depending on decision variables and affected by random factors  should satisfy equality or inequality for ``almost all'' values of those random factors (according to their probability distributions). This is in particular the case of constraints which express ``laws of nature'' which are part of the mathematical model of the system under consideration. However, regarding objectives or ``wishes'', such strict constraints are generally inappropriate from the economic or simply realistic point of view. Suppose for example that a pressure should not exceed a certain level beyond which death will almost surely happen. First of all, observe that it is hard to aggregate such an objective (actually, that to stay alive) with other more economic objectives which aim at saving money. Second, under the constraint that the pressure ``almost never'' exceeds the dangerous level, the operation can be extremely costly if not simply impossible. That is, some \emph{risk} must be accepted for the operation to be economically viable.

The second possibility (constraints ``in expectation'') means that,
given a decision, the \emph{expected value} \(\Theta(u)=\EE \big(\theta(u,\xi)\big) \) of a critical quantity (a pressure in our example) should not
exceed a certain level. Such a formulation is generally mathematically
attractive, but it is difficult to understand how much risk is involved
in choosing such or such prescribed level. Indeed, given a decision $u$, the pressure (to keep on with our example) becomes a random variable \(\theta(u,\xi)\) with a certain distribution (which is affected by the chosen decision), and the only thing one ask is that the first moment (the expectation) of this random variable stay below a prescribed level, but with no direct control on how much of the probability mass will lie beyond that prescribed level.

The third possibility advocated to (constraints ``in probability'' or ``proba\-bilistic constraints'') means that one accepts that the critical quantity (the pressure, say) remains under the prescribed level not ``almost always'' as earlier, but with a certain probability whose value must be chosen. In mathematical terms, one now considers the problem
\begin{equation}
\label{eq-pbproba}
\min_{u\in\Uad}\EE \big(j(u,\xi)\big) \quad\text{s.t.}\quad \PP\big(\theta(u,\xi)\leq \alpha\big)\geq \ppp\;.
\end{equation}
This chosen probability value \(\ppp\) exactly reflects the risk one is ready to assume (in contrast with the previous approach of constraints in expectation). As discussed earlier, duality should then help in evaluating the sensitivity of the optimal cost function with respect to this accepted, but arbitrarily fixed, level of risk.

\subsection{Quantitative vs.~Qualitative Risk Measures}\label{sec-risk}
We now motivate the interest of probability constraints in contrast with other measures of risk.
Before choosing a risk measure, it is very important to
know which type of failure  we are interested in: quantitative
failure or qualitative failure. For example, a power supply company
would minimize its cost under the constraint of supplying the demand. If
that demand cannot be fully supplied, it matters to know which
percentage of it will not be covered  and during which amount of
time. This is what we mean by ``quantitative failure'': introducing a
penalty for the total amount of demand not supplied directly into the
cost function, or choosing to constrain a quantity which accounts for
the amount of supply failure is appropriate in that situation. On
the contrary, when  simply going beyond a certain threshold causes death,
it not does matter to know by which amount that threshold has been
exceeded --- this is what we mean by ``qualitative failure'' --- but it
does matter to know the likelihood of going beyond that critical
threshold. Probability constraints are particularly adapted to this
latter situation.

In fact, because of the mathematical
difficulties raised by probability constraints, these constraints must be
exclusively used in the case of qualitative failure problems. For quantitative failure problems, there are other risk measures with better
mathematical properties (e.g. convexity), like Conditional Value-at-Risk
(CVaR) for instance. Introduced in \cite{erUryasev}, CVaR is one of the
most popular risk measure in finance. CVaR is the average of a random
variable for the worst scenarios. Denote by $\alpha_{u}(\ppp)$ the quantile function of the distribution of \(\theta(u,\cdot)\) with
confidence level $\ppp$ (also called Value-at-Risk). Then, CVaR, denoted by
$\phi_{\ppp}(u)$, is defined by
$$\phi_{\ppp}(u)=\EE\big(\theta(u,\xi)\mid \theta(u,\xi)\geq \alpha_{ u}(\ppp)\big)\,.$$
The risk constraint will be then \(\phi_{\ppp}(u)\leq \overline{\alpha}\), where \(\overline{\alpha}\) represents the accepted level of risk, and \(\ppp\) is fixed a priori.

Notice that the critical threshold $\alpha$ in the probability constraint is generally provided by technical considerations, whereas \(\ppp\) characterizes the level of risk one is ready to accept. That is, the decision maker may bargain about the constraint level $\ppp$  but not on
that threshold $\alpha$ which is a technical data. With the CVaR approach, this \(\alpha\) disappears from the formulation and we believe that this is a
weakness of this approach. Moreover, in the case of ``qualitative failure'', there is no meaning in averaging values of \(\theta\) beyond a threshold which is supposed to be fatal.

\subsection{About this Paper}
Problem~\eqref{eq-pbproba} is the class of problems considered in this
paper. Its advantage is again the fact that the meaning of constraints
in terms of risk assumed is of immediate perception. Its drawback is its
mathematical difficulty.

In this paper, we discuss an approach relying upon Lagrangian duality
and stochastic gradient to solve \eqref{eq-pbproba}.
The use of stochastic gradient is based on the reformulation of
constraints  in probability as constraints in expectation, using an
indicator function. As usual with stochastic gradient, we assume that the functions involved in the problem (here, \(j\) and \(\theta\)) are known explicitly but that the probability law governing the ``noise''~\(\xi\) is not, or that the computation of expectations of the variables involved is out of reach or too costly. It is rather assumed that an external mechanism delivers samples of \(\xi\) which are used in the iterative algorithm. 

Writing the probability as an expectation opens the possibility of using
stochastic gradient algorithms, but it also raises the difficulty of
handling a discontinuous function, namely the indicator function. We will discuss various ways of overcoming that difficulty.

The rest of the paper is organized as follows. In
\S\ref{section2}, we present the analysis of the problem, and our
resolution strategy, a stochastic Arrow-Hurwicz algorithm. In
\S\ref{section3}, we describe two structural difficulties of
stochastic programming under probability constraint. To
implement a stochastic Arrow-Hurwicz algorithm, we need to handle
the probability function gradient. In~\S\ref{section4}, the
question we are interested in is therefore: how to compute
stochastic estimates of the probability function gradient? In
order to answer this question, we propose two methods that allow
to obtain \emph{biased} stochastic gradient estimates, namely Approximation by
Convolution (AC) ad Finite Differences (FD). We defer to a forthcoming paper to propose techniques based on integration by parts ideas and providing \emph{unbiased} (or \emph{consistent}) estimates, and to compare them with the biased estimates studied hereafter.

We consider a very basic
portfolio optimization problem under a probability constraint and use this example throughout the rest of the paper to illustrate and compare the AC and FD techniques. Section~\ref{section5} is devoted to the convergence analysis of the proposed methods. Finally, \S\ref{section6} reports numerical experiments with the Arrow-Hurwicz algorithm.

\section{Analysis of the Problem} \label{section2}

\subsection{Review of Main Difficulties} \label{difficulties}
Probability constraints provide a
straightforward risk formulation with an immediate intuitive
interpretation. But at the same time, it is well known that such
constraints raise important mathematical difficulties, such as the lack of
convexity or connectedness of the feasible  subset. Indeed, even if \(\theta\) is a convex function of \(u\) for almost all values
\(\xi\), the constraint in \eqref{eq-pbproba} may not define
a convex feasible subset in \(\UU\) (which can even be not connected, if not
empty). Those convexity or connectedness (or emptiness) properties depend
of course on the properties of \(\theta\) as a function of its two
arguments \(u\) and \(\xi\), on the probability distribution of the
random variable \(\xi\), on the level \(\alpha\) of constraint required
and on the level \(\ppp\) of probability required. One may refer to
\cite{erKall} for a discussion on those convexity properties, and to
\cite{erHenrion} for connectedness properties.

In \cite{erKall}, the authors prove that if $\theta(\cdot,\cdot)$ is
jointly convex in $(u,\xi)$ and the probability measure is quasi-concave, then the feasible
subset of \eqref{eq-pbproba} is convex. But those assumptions
seem to us to be rather strong in practice, notably the
joint convexity property. Indeed, there are numerous situations in which
the decision variable multiplies the random variable, as in the portfolio
problem presented in \S\ref{section3}, or in a quite other domain,
when one wants to model the breakdown of an actuator, in which case the random
variable must be able to kill the action the decision variable. In all those situations, the
joint convexity property is not realistic.

\subsection{Mathematical Approach for Programming under Probability Constraint}

Before explaining our resolution strategy, we review some basic results on the stochastic Arrow-Hurwciz algorithm \cite{arrow,culio}. First of all, starting with the deterministic constrained optimization problem~\eqref{eq-pb} and assuming that there exists a saddle point of the Lagrangian~\eqref{eq-lag} over \(\Uad\times \RR^{d}_{+}\), the (deterministic) Arrow-Hurwicz algorithm consists in performing successive minimization and maximization steps to search for this saddle point:
\begin{subequations}\label{eq-ahd}
\begin{align}
& u^{k+1}=\Pi_{U^{\mathrm{ad}}} \Big(
u^{k}-\varepsilon^{k}\big(\nabla_{u}J(u^{k})+\nabla_{u}\Theta(u^{k})\lambda^{k}\big) \Big)\,,\\
& \lambda^{k+1}=\Pi_{+}\Big(
\lambda^{k}+\rho^{k}\big(\Theta(u^{k+1}) - \alpha\big)\Big)\,,
\end{align}
where $\Pi_{U^{\mathrm{ad}}}$ is the projection onto $U^{\mathrm{ad}}$ and \(\Pi_{+}\) is the projection onto the cone \(\RR^{d}_{+}\).
\end{subequations}
\subsubsection{Stochastic Arrow-Hurwicz Algorithm} \label{algorithm}
The stochastic Arrow-Hurwicz algorithm is typically used to solve a
stochastic optimization problem with constraint in expectation:
\begin{equation}\label{eq-exp}
\min_{u \in \Uad}\EE \big( j(u,\xi) \big)
\quad\text{s.t.}\quad \EE \big(\theta(u,\xi)\big)\leq \alpha\,,
\end{equation}
where the calculation of expectations is basically difficult if not impossible. The stochastic algorithm overcomes this difficulty by simultaneously approximating the saddle point and
the expectations by a Monte-Carlo like technique. It is in fact a combination of the
idea of the Monte-Carlo method with the iterative procedure of gradient methods in optimization.

We do assume that a  saddle point \((u^{\sharp},\lambda^{\sharp})\) over \(\Uad\times \RR^{d}_{+}\)
exists for the Lagrangian associated with problem~\eqref{eq-exp} (hence \(u^{\sharp}\) is a solution of \eqref{eq-exp}). Observe that this Lagrangian~\(L\)~\eqref{eq-lag} is equal to the expectation of \(\ell(u,\lambda,\cdot)=j(u,\cdot)+\langle\lambda,\theta(u,\cdot)-\alpha\rangle\). We use unbiased estimates of the gradients of~\(L\) in \(u\) and \(\lambda\) obtained with the corresponding gradients of \(\ell\)  evaluated at independent drawings~\(\xi^{k}\) of \(\xi\) supposed to follow the probability law of \(\xi\).
More specifically, at stage $k$ of the algorithm, $u^{k}$ and $\lambda^{k}$ being the current estimates of the solution,
\begin{enumerate}
\item we draw an independent sample (according to the law $\PP$ of
  $\xi$), or we observe a new independent sample $\xi^{k+1}$,
\item we compute the stochastic gradients $
  \nabla_{u}j(u^{k},\xi^{k+1})$ and $ 
  \nabla_{u}\theta(u^{k},\xi^{k+1}),$
\item we update $u^{k+1}$ and $\lambda^{k+1}$ as follows:
\begin{subequations}\label{eq-ahs}
\begin{align}
& u^{k+1}=\Pi_{U^{\mathrm{ad}}} \Big(
u^{k}-\varepsilon^{k}\big(\nabla_{u}j(u^{k},\xi^{k+1})+\nabla_{u}\theta(u^{k},\xi^{k+1})\,\lambda^{k}\big) \Big)\,,\label{eq-ahs-a}\\
& \lambda^{k+1}=\Pi_{+}\Big(
\lambda^{k}+\rho^{k}\big(\theta(u^{k+1},\xi^{k+1})-\alpha\big) \Big)\,.\label{eq-ahs-b}
\end{align}
\end{subequations}
\end{enumerate}
Under essentially measurability and convexity assumptions, assuming that the
Lagrangian of the problem admits a saddle point, and with
$$\sum_{k \in \NN} \varepsilon^{k} = + \infty\,,\quad \sum_{k
  \in \NN}(\varepsilon^{k})^{2} < + \infty \qquad (\text{and the same
  for} \,\rho^{k})\,,$$
 it is shown in \cite{ercohen} that this algorithm converges in the sense that
  primal $\{u^{k}\}_{k \in \NN}$ and dual $\{\lambda^{k}\}_{k \in \NN}$
sequences are bounded a.s.~and that $\{u^{k}\}_{k \in \NN}$
a.s.~weakly converges to some solution~$u^{\sharp}$ of \eqref{eq-exp}.

\subsubsection{Mathematical Approach: Strategy and Difficulties}\label{sec-approach}
From now on, we assume that the critical or risky event is defined by a single (scalar) inequality, that is, \(\theta\) is \(\RR\)-valued. Let \(\II\) denote the indicator function of the positive half-line.
The principle of our resolution strategy is first to replace the
probability constraint by a constraint in expectation
\begin{equation}
\label{eq-exp2}
-P(u) \le -\ppp\,,
\end{equation}
where \(P(u)= \PP \big(\theta(u,\xi)\leq \alpha\big)\) and this probability is evaluated as an expectation:
\begin{equation}
\label{eq-probconstr}
 \PP \big(\theta(u,\xi) \le \alpha \big)= \EE
\Big( \II \big( \alpha-\theta(u,\xi)
\big) \Big)\,,
\end{equation}
then resort to duality, and lastly resort to
the stochastic Arrow-Hurwicz algorithm. Observe that w.r.t.~the general formulation~\eqref{eq-pb}, \(\Theta\) is now \(-P\) and the constraint level \(\alpha\) is now \(-\ppp\).

There are major difficulties with probability constraints.
\begin{itemize}
  \item First of all, as we recalled in \S\ref{difficulties},
convexity is not preserved. Therefore, existence of a saddle point of
the Lagrangian is not granted; in this case, we should resort to
\emph{augmented} Lagrangian techniques to increase the chance that a saddle point does exist. However, this raises new problems because the nonlinearities involved in the augmented Lagrangian formula cannot be combined straightforwardly with expectation to yield obvious stochastic  gradient algorithms. This issue of using augmented instead of ordinary Lagrangians goes beyond
the scope of this paper and is not considered here. 
\item We rather address here another difficulty: to replace a probability constraint by a
constraint in expectation, we need to handle the indicator function (see \eqref{eq-probconstr}); but this
indicator function involves a discontinuity which
 may, nevertheless,  be smoothed by the expectation operation; however, the
stochastic Arrow-Hurwicz algorithm is based on the consideration
of a \emph{unique} sample drawn at each iteration; obtaining a stochastic
gradient is therefore not trivial. As it will be shown later on,
we propose two ways of overcoming this difficulty: Approximation by Convolution (AC) and Finite Differences (FD). Both approaches will lead us to consider algorithms such as \eqref{eq-ahs} in which either a smooth  approximation \(\tapp\) of function~\(\theta\) will be used in both equations \eqref{eq-ahs-a} and \eqref{eq-ahs-b}, or an approximation of its gradient will be used in \eqref{eq-ahs-a}, leading to a stochastic Arrow-Hurwicz algorithm with \emph{biased} stochastic estimates of the Lagrangian gradients.
\end{itemize}
\section{Structural Difficulties of Programming under Probability Constraint} \label{section3}
In this section, we focus on two structural difficulties of optimization
problem with probability constraints. The first one is related to the
non convexity of probability constraint: we show what are the
consequences of this non convexity on the stochastic Arrow-Hurwicz
algorithm. The second one concerns the behavior of
the probability constraint multiplier in some particular cases.

\subsection{The Non Convexity of Probability Constraint}
Consider the following optimization problem
\begin{equation}
\label{eq-simple}
\min_{u \in \RR}\frac{1}{2}(u-1)^{2} \quad \text{s.t.} \quad
\PP ( u \le \xi)\ge \ppp\,,
\end{equation}
where $\xi$ is a normal random variable with mean value \(-2\) and standard deviation \(0.1\).

In order to point out the first difficulty, we study the qualitative
behavior of the underlying deterministic problem, namely that in which
the probability constraint is expressed with help of the
cumulative distribution function $F$ of $\xi$: indeed, \(\PP( u \le \xi )= 1-F(u)\) and therefore, the constraint in \eqref{eq-simple} can be replaced by
\begin{equation}
\label{eq-replace}
1-F(u) \ge \ppp
\end{equation}
without of course altering the corresponding Kuhn-Tucker multiplier.

The Lagrangian of problem~\eqref{eq-simple}, with constraint written as in \eqref{eq-replace}, is
$$L(u,\lambda)=\frac{1}{2}(u-1)^{2}+\lambda\,\big{(}F(u)-1+\ppp\big{)}\,;$$
and, the Kuhn-Tucker necessary conditions of optimality allow for the
calculation of the solution which is, for example with $\ppp=0.7$,
$$u^{\sharp}=-2.05244 \qquad \text{and} \qquad \lambda^{\sharp}=0.877913\,.$$
As expected, $u^{\sharp}$ takes the maximal possible value to satisfy the constraint, that is, the $(1-\ppp)$-th percentile of the distribution: $F(-2.05244) = 0.3$, so the constraint is active, and saturated.

Figure \ref{surf-lagt}
\begin{figure}[hbtp]
\begin{center}
\includegraphics[scale=0.5]{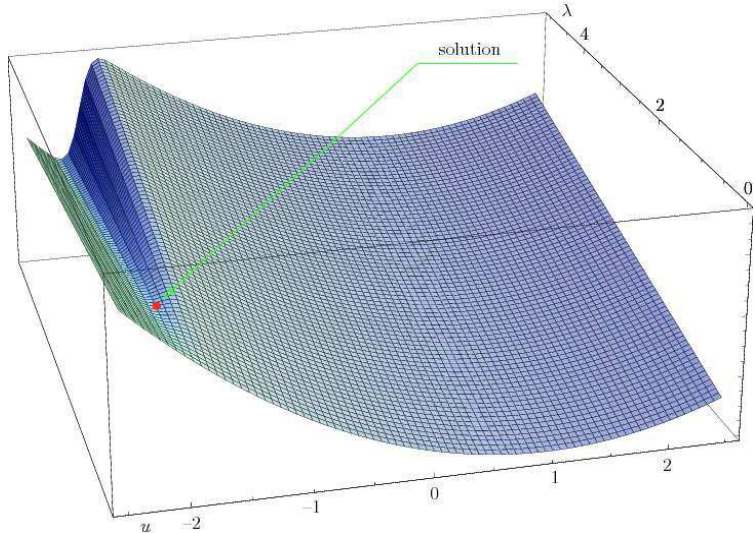}
\caption{Lagrangian surface}
\label{surf-lagt}
\end{center}
\end{figure}
represents the Lagrangian surface in the $(u,\lambda)$ domain.
For $\lambda=0$, we recognize the convex shape of
the cost function only. For larger values of $\lambda$, the nonconvex
form of $F(\cdot)$ shows up more and more, which explains the two valleys.

We insist on the following two points. First, our approach in this paper
is based on stochastic estimates of the gradients of the Lagrangian,
\emph{not} on their exact computation, which we assume impossible. Naturally, we cannot expect the stochastic algorithm to behave
better than its underlying \emph{average} driving vector field, which we will study directly.
Second, with some probability distributions there is a way to manipulate the constraints in order to preserve convexity. In particular with
normal distributions, the map $\ln(1-F(\cdot))$ is concave
\cite{erprekopa}, which leads to a convex formulation of the
constraint. If we seem to overlook this remark in the following
treatment, this is because the difficulty we try to point out in this
very simple case is {\it a fortiori\/} likely to occur in more general
situations  when the above clever manipulations  are no longer possible: recall that we do not assume knowledge of the distribution of $\theta(u,\xi)$.

Let us now consider the ODE
associated with the Arrow-Hurwicz algorithm,
\begin{subequations}
\label{eq-algo}
\begin{align}
 \dot{u}   & = - L'_{u}(u,\lambda)= -J'(u)-\lambda\,F'(u)\;,  \\
  \dot{\lambda}  & = L'_{\lambda}(u,\lambda) =F(u)-1+\ppp\;.
\end{align}
\end{subequations}
At $u^{\sharp}=1$, the \emph{unconstrained} optimal solution, one has
that $J'(u^{\sharp})=0$ and $F'(u^{\sharp})=\mbox{\(1.47 \times 10^{-195}\)}$, because
$u^{\sharp}$ happens to be in the tail of the distribution. Therefore, even for
very large values of $\lambda$, $L_{u}'(u^{\sharp},\lambda) $ remains very close to~0; in
other words, if the (continuous) algorithm~\eqref{eq-algo} is started at (or close to) \((u^{\sharp},\lambda)\), for practically any \(\lambda\), $u$ will stay at $u^{\sharp}$! At the same time, if
$u^{\sharp}$ doesn't satisfy the constraint, one has that
$F(u^{\sharp})>1-\ppp$. It follows that $L_{\lambda}'(u^{\sharp},\lambda)>0$:
$\lambda$ increases almost indefinitely! This is better illustrated by
the vector field of the ODE, shown in Figure \ref{champ}.
\begin{figure}[hbtp]
\begin{center}
\includegraphics[scale=0.4]{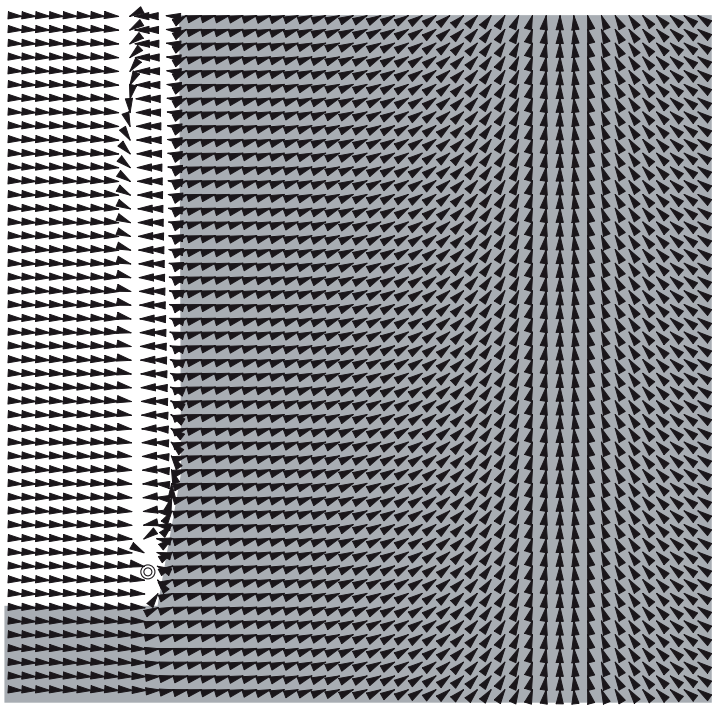}
\caption{Vector field of the ODE}
\label{champ}
\end{center}
\end{figure}

The white zone corresponds to the basin of
attraction of the optimal solution. In the grey zone, the algorithm is
driven more or less indefinitely towards large values of $\lambda$ in a
valley corresponding to the unconstrained solution $u^{\sharp}=1$.

This example shows that even a deterministic algorithm may, if started on the ``wrong'' side, wander away from the actual optimal solution. Stochastic versions of the algorithm are expected to behave erratically, and even if the current values of $(u^k,\lambda^k)$ are in the basin of attraction of the Kuhn-Tucker point, random observations may take the algorithm to other regions away from the optimal solution.

\subsection{Degeneracy of the Probability Constraint Multiplier}\label{sec-example}
Let us now consider the following portfolio optimization problem. This
very simple problem allows us to point up another structural difficulty
of probability constraint. This example will also be used in the remainder of this paper to illustrate our various approaches.

We borrow a capital which we have to pay off at the end of the
period with an interest rate~$l$. We can invest a proportion $u$ of
this capital at the fixed rate $b$, invest a proportion $v$ at the random rate $\xi$, and finally consume the
available remainder, which brings a satisfaction measured by
a concave nondecreasing function $f$. We assume of course that $\EE ( \xi )
>l$, in other words, risk is rewarding. We try to maximize the sum of the
  satisfaction provided by consumption and by the expected final
  capital. We also want to be in a position to pay off
  the capital and the
interests at the end of the period, with a probability of a least
$p$. In this case, the optimization problem can be stated as follows:
\begin{align*}
& \max_{u,v} \EE \big(f(1-u-v)+(1+b)u+(1+\xi)v \big)\\
\text{s.t.} \quad & u \ge 0\,,\quad v \ge 0\,,\quad u+v \le 1\,,\\
 & \PP \big((1+b)u+(1+\xi)v \ge 1+l \big)\ge \ppp\,.
\end{align*}
Let
\begin{gather}
l=0.15, \qquad b=0.2, \qquad f(x)=-x^{2}/2+2x\,,\notag\\
F(\xi)=\begin{cases}
       0 & \qquad \mbox{if} \qquad \xi < \bar{\xi}-\sigma\,,\\
        \frac{1}{16} \Big( 3\big(
       \frac{\xi-\bar{\xi}}{\sigma} \big)^{5}-10\,\big(
       \frac{\xi-\bar{\xi}}{\sigma} \big)^{3}+15\,\big(
       \frac{\xi-\bar{\xi}}{\sigma} \big)+8 \Big) & \qquad \mbox{if} \qquad  \xi
       < \bar{\xi}+\sigma\,,\\
       1 &\qquad \mbox{otherwise}\,,
\end{cases}\label{eq-F}
\end{gather}
where $F$ is the distribution function. For numerical experiments, we
set $\bar{\xi}=0.4$ and $\sigma=3$. To identify this problem with \eqref{eq-pbproba},  consider the equivalent
minimization problem with cost function 
$$j(u,v,\xi)=-f(1-u-v)-(1+b)\,u-(1+\xi)\,v\,.$$ Let also
\begin{equation}P(u,v)=\PP \big ((1+b)\,u+(1+\xi)\,v \ge 1+l
\big)\,.\label{eq-proo}\end{equation}
This problem is now formulated as
\begin{displaymath}
\min_{u\geq 0, v\geq 0} \EE\big(j(u,v,\xi)\big)\quad \text{s.t.}\quad u+v \leq 1, \quad -P(u,v)\leq -\ppp\,
\end{displaymath}
with Lagrangian
\begin{displaymath}
L(u,v;\lambda_{1},\lambda_{2})=\EE\big(j(u,v,\xi)\big)+\lambda_{1}(u+v-1)+\lambda_{2}(\ppp-P(u,v))\,.
\end{displaymath}

Figure~\ref{coutoptimal} represents the \emph{optimal cost} as a function of
probability level $\ppp$.
\begin{figure}[hbtp]
\begin{center}
\includegraphics[scale=1]{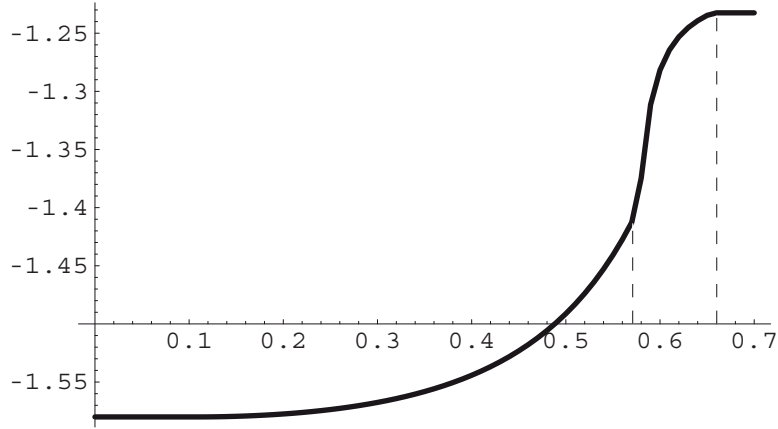}
\caption{Optimal cost}
\label{coutoptimal}
\end{center}
\end{figure}
We observe that this function is not convex. In fact, it is convex for
probability levels below 0.57. For probability levels close to 0.57, the
risk of not being in a position to pay off the capital and the interests
is important; the investment in the risky asset $v$ decreases to zero, whereas
simultaneously, that in the secure asset $u$
increases. The optimal cost, which was until then a convex function of
the required probability level, becomes concave. Above 0.65, $v$
is zero, $u$ is equal to $(1+l)/(1+b)=0.95833$ in order to satisfy the probability constraint, and the optimal cost
becomes constant. 

This example shows  another structural difficulty of optimization under a
probability constraint, namely the degeneracy of the probability
constraint multiplier. Indeed, for $\ppp$ small enough, the secure asset,
$u$ is zero at optimum. For $\ppp$ large enough, the risky asset,
$v$, is zero at optimum. In the latter case, the event
$\big{\lbrace} (1+b)\,u+(1+\xi)v \geq 1+l\big{\rbrace}$ can only have
probability 0 or 1. That is to say, at $v=0$ and $u=(1+l)/(1+b)=0.95833$, the
function \eqref{eq-proo}
exhibits a  discontinuity (see Figure~\ref{discontinut}).
\begin{figure}[hbtp]
\begin{center}
\includegraphics[scale=1]{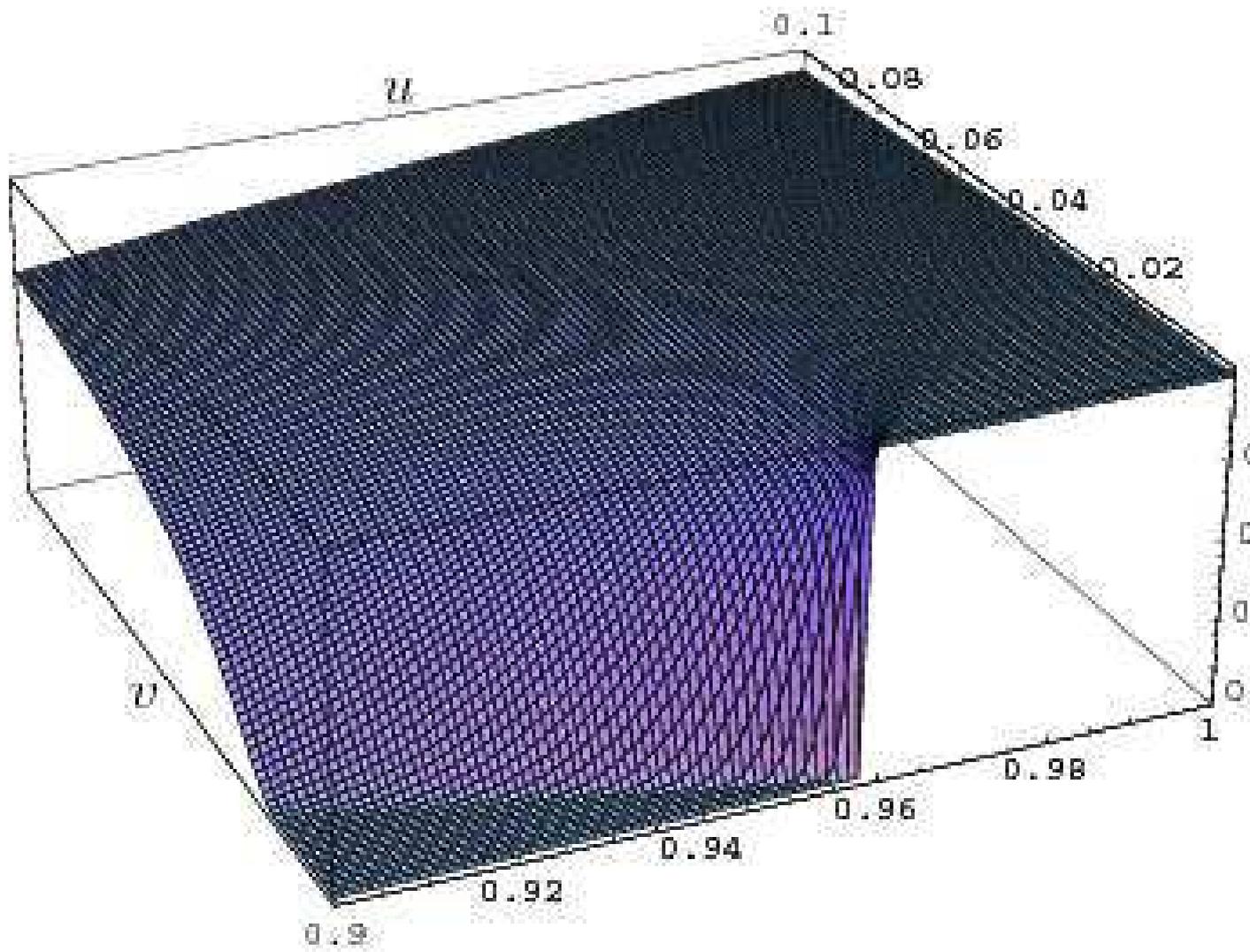}
\caption{Graph of the probability function for \((u,v)\in[0.9,1]\times[0,0.1]\)}
\label{discontinut}
\end{center}
\end{figure}

When  $\ppp$ is large and  $v^\sharp =0$, the probability~\eqref{eq-proo} can only take values 0 or 1 (depending on the value of \(u\)), that is, this probability is strictly larger than the required level \(\ppp\) when the constraint is met. Clearly the constraint is not ``saturated'', because there is no equality, and consequently the corresponding multiplier is zero; small changes in $\ppp$ will not affect the solution. However, the constraint is ``active'', that is,   the solution~\((u^{\sharp},v^{\sharp})\) of the problem \emph{with} the probability constraint is different from the solution~\((u^{*},v^{*})\) \emph{without} it. 

In the remainder, in order to guarantee existence of a saddle point of the Lagrangian, we consider only probability levels below 0.57 (otherwise,
one should resort to augmented Lagrangian techniques, but, as it has been said
earlier, this issue goes beyond the scope of this paper). For example, for a probability level of 0.24, the primal-dual optimal solution is
\begin{equation}
\label{eq-sharp}
u^{\sharp}=0\,,\quad v^{\sharp}=0.50407\,,\quad \lambda_{1}^{\sharp}=0\,, \quad \lambda_{2}^{\sharp}=0.08815 \,.
\end{equation}

\section{Stochastic Estimates of Probability Function Gradient}
\label{section4}
As it was mentioned at \S\ref{sec-approach}, in order to use a stochastic
Arrow-Hurwicz algorithm, we need to handle the probability function
gradient, that is, to obtain a stochastic estimate of the gradient of  (see \eqref{eq-probconstr})
\begin{equation}
\label{eq-psi}
P(u)= \EE \Big(
\II\big(\alpha-\theta(u,\xi)\big) \Big)\,.
\end{equation}
It is well known that this gradient is difficult to compute; we
may refer to \cite{eruryasev} for a discussion of this topic.  Recall that in our case, replacing the probability
constraint by a constraint in expectation raises the difficulty of handling
an indicator function, which is a discontinuous function.
One way of
dealing with this problem is to appeal to a technique based on convolution to derive a smooth approximation of this discontinuous function. Alternatively, we can obtain a stochastic estimate of the gradient of this function, based on a single sample drawing of \(\xi\), by appealing to a finite difference technique, and we rely upon the multiplication of such drawings along the iterative algorithm to smooth out this crude estimate.

\subsection{Approximation by Convolution Method (AC)}
\subsubsection{General Theory}\label{sec-gen}
The basic principle of this approach is to smooth out the indicator function appearing in \eqref{eq-psi} so that differentiation underneath expectation becomes possible. Consider a smooth function~\(h: \RR \rightarrow \RR\) with the following properties~: \(h\) as a unique maximum at \(x=0\),
\begin{equation}
\label{eq-prop}
\forall x,\quad h(x)\geq 0;\quad h(x)=h(-x);\qquad \int_{-\infty}^{+\infty}h(x)\,dx = 1\,.
\end{equation}
We will give a few examples of such functions later on and will consider only functions with finite support although this is not absolutely necessary. With any other function~$\phi : \RR \rightarrow \RR$, and \(r\) a small positive number, the
convolution
$$\phi_{r}(x)=\frac{1}{r}\, \int_{-\infty}^{\infty} \phi(y) h \Big(
\frac{x-y}{r} \Big)\,dy\,,$$
can be viewed as an approximation of $\phi$ since \(h(\cdot/r)/r\) approximates the Dirac function (in the sense of convergence of distributions) at \(0\) when \(r\) tends to
zero. The function \(\phi_{r}\) is differentiable with
$$\phi_{r}'(x) =\frac{1}{r^{2}}\int_{-\infty}^{\infty} \phi(y)\,h'\Big(\frac{x-y}{r}\Big)\,dy\,.$$

This technique is widely known as the ``mollifier'' technique \cite{erErmoliev}. We now apply it to \(\II\): recall \eqref{eq-psi} and define
\begin{align}
P_{r}(u) &= \frac{1}{r}\EE\Bigg(\int_{-\infty}^{+\infty}
\II(y)\, h\Big(\frac{\alpha-\theta(u,\xi)-y}{r}\Big) \,dy\Bigg)\notag\\&= \frac{1}{r}\EE\Bigg(\int_{0}^{+\infty}
h\Big(\frac{y-\alpha+\theta(u,\xi)}{r}\Big) \,dy\Bigg)\,\notag\\
\intertext{(here, we have used the fact that \(h\) is an even function)}
&=\EE\big(p_{r}(u,\xi)\big)\label{eq-tha}\\
\intertext{with}
p_{r}(u,\xi)&= \frac{1}{r}\int_{0}^{+\infty}
h\Big(\frac{y-\alpha+\theta(u,\xi)}{r}\Big) \,dy\,.\label{eq-ther}
\end{align}
Then 
\begin{align}
(p_{r})'_{u}(u,\xi)& = \frac{1}{r^{2}}\,\theta'_{u}(u,\xi)\int_{0}^{+\infty}
 h'\Big(\frac{y-\alpha+\theta(u,\xi)}{r}\Big) \,dy\notag\\&=-\frac{1}{r}\,h\Big(\frac{\theta(u,\xi)-\alpha}{r}\Big)\,\theta'_{u}(u,\xi)\,,\label{eq-bias}\\\intertext{and clearly}
 P'_{r}(u)&=\EE\big((p_{r})'_{u}(u,\xi)\big)\,.\label{eq-bias1}
\end{align}
Therefore, for any sample \(\xi\), \eqref{eq-bias}
can be considered as a stochastic estimate of \(P'(u)\), albeit a \emph{biased} one; however, this bias vanishes when \(r\) approaches~0. In what follows, we evaluate the bias and the variance of this estimate as a function of \(r\).
\begin{remark}\label{rem-variante}
In the same way, according to \eqref{eq-tha}, \eqref{eq-ther} can be considered a biased estimate of \(P(u)\) whereas
\begin{equation}
\label{eq-puxi}
p(u,\xi)=\II\big(\alpha-\theta(u,\xi)\big)
\end{equation}
is an unbiased one. In Equation~\eqref{eq-ahs-b} of the iterative algorithm, we may either use the unbiased estimate or the biased one, consistently with that used in \eqref{eq-ahs-a} for \(\Theta'\). The latter option has the advantage of preserving the specific geometry of vector fields of Arrow-Hurwicz algorithms (with some symmetry, or skew-symmetry, properties, according to the point of view). The former option may seem preferable as long as it avoids seemingly unnecessary bias or approximation. Both options will be tested later on in \S\ref{section6}. Therefore, the next theorem deals with both the estimates~\eqref{eq-ther} and \eqref{eq-bias} in order to cover all variants.
\end{remark}

\begin{theorem}\label{th-1}The random variable (or vector)~\(\xi\) is supposed to admit a density \(q(\xi)\). For the random variable \(X_{u}(\cdot)=\theta(u,\cdot)\) depending on the parameter~\(u\), we assume that the induced probability law also admits a density denoted \(q_{X_{u}}(x)\) and that this density is at least twice continuously differentiable with \(L^1\) first and second order derivatives. Then, for any sample drawing \(\xi\) following the probability density \(q\), the expression~\eqref{eq-ther}  provides a biased estimate of \(P(u)\) with a bias in \(\mathrm{O}(r^{2})\) and a variance in \(\mathrm{O}(1)\).

For the pair of random variables (or vectors) \(\big(X_{u}(\cdot),Y_{u}(\cdot)\big)=\big(\theta(u,\cdot),\theta'_{u}(u,\cdot)\big)\) depending on the parameter~\(u\), we assume that the induced joint probability law admits a density denoted \(q_{X_{u}Y_{u}}(x,y)\) and that this density is at least twice continuously differentiable in \(x\) with integrable \(L^1\) first and second order derivatives.
Then, for any sample drawing \(\xi\) following the probability density \(q\), the expression~\eqref{eq-bias} provides a biased estimate of \(P'(u)\) with a bias in \(\mathrm{O}(r^{2})\) and a variance in \(\mathrm{O}(1/r)\).
\end{theorem}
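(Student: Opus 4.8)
My plan is to handle the two estimates one after the other; in each case the bias is the expectation of the estimate minus the target quantity — by \eqref{eq-tha} and \eqref{eq-bias1} this means comparing \(P_{r}(u)\) with \(P(u)\), respectively \(P'_{r}(u)\) with \(P'(u)\) — while the variance I would bound through the second moment. Throughout I would exploit that \(h\) is even, nonnegative, of unit mass and (as assumed) of compact support.

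\emph{Bias and variance of the estimate \eqref{eq-ther} of \(P(u)\).} I would first substitute \(t=(y-\alpha+\theta(u,\xi))/r\) in \eqref{eq-ther} to rewrite \(p_{r}(u,\xi)=\bar H\big((\theta(u,\xi)-\alpha)/r\big)\), where \(\bar H(s)=\int_{s}^{+\infty}h(t)\,dt\) is bounded, nonincreasing, equal to \(1\) on a left half-line and to \(0\) on a right half-line, so that the function \(s\mapsto\bar H(s)-\II(-s)\) has compact support with \(\int\big(\bar H(s)-\II(-s)\big)\,ds=0\) and \(\int s\,\big(\bar H(s)-\II(-s)\big)\,ds=\tfrac12\int t^{2}h(t)\,dt\); these two elementary identities follow from the symmetry and unit mass of \(h\). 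Writing \(X_{u}=\theta(u,\cdot)\), which has density \(q_{X_{u}}\), and using \(P(u)=\EE\big(\II(\alpha-\theta(u,\xi))\big)\) from \eqref{eq-psi}, the bias equals \(\int\big(\bar H((x-\alpha)/r)-\II(\alpha-x)\big)\,q_{X_{u}}(x)\,dx\), which after \(x=\alpha+rs\) becomes \(r\int\big(\bar H(s)-\II(-s)\big)\,q_{X_{u}}(\alpha+rs)\,ds\); a second-order Taylor expansion of \(q_{X_{u}}\) at \(\alpha\) — legitimate because \(q_{X_{u}}\in C^{2}\) and the integration variable stays in a bounded set — together with the two moment identities gives bias \(=\tfrac{r^{2}}{2}\,q'_{X_{u}}(\alpha)\int t^{2}h(t)\,dt+\mathrm{O}(r^{3})=\mathrm{O}(r^{2})\). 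For the variance I would just use \(p_{r}(u,\xi)=\bar H(\cdot)\in[0,1]\), so \(\var\big(p_{r}(u,\xi)\big)\le\tfrac14=\mathrm{O}(1)\) (and it does not vanish as \(r\to0\), since \(p_{r}\) converges to the Bernoulli variable \(\II(\alpha-\theta(u,\cdot))\) of parameter \(P(u)\)).

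\emph{Bias and variance of the estimate \eqref{eq-bias} of \(P'(u)\).} I would write \(G_{r}(u,\xi)=-\tfrac1r\,h\big((X_{u}-\alpha)/r\big)\,Y_{u}\) with \((X_{u},Y_{u})=\big(\theta(u,\cdot),\theta'_{u}(u,\cdot)\big)\) of joint density \(q_{X_{u}Y_{u}}\), and introduce \(g_{u}(x)=\int y\,q_{X_{u}Y_{u}}(x,y)\,dy\) and \(m_{u}(x)=\int y^{2}q_{X_{u}Y_{u}}(x,y)\,dy\). Then \(P'_{r}(u)=\EE\big(G_{r}(u,\xi)\big)=-\tfrac1r\int h((x-\alpha)/r)\,g_{u}(x)\,dx=-\int h(s)\,g_{u}(\alpha+rs)\,ds\) after \(x=\alpha+rs\). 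The twice continuous differentiability in \(x\) of \(q_{X_{u}Y_{u}}\), with the assumed \(L^{1}\) derivatives, lets me differentiate under the integral so that \(g_{u}\in C^{2}\) near \(\alpha\) with \(g_{u}^{(j)}(x)=\int y\,\partial_{x}^{j}q_{X_{u}Y_{u}}(x,y)\,dy\) for \(j=1,2\); a second-order Taylor expansion and \(\int s\,h(s)\,ds=0\) then give \(P'_{r}(u)=-g_{u}(\alpha)+\mathrm{O}(r^{2})\). Since \(P'(u)\) is precisely the \(r\to0\) limit \(-g_{u}(\alpha)\) of \(P'_{r}(u)\) — which one may also recover by differentiating \(P(u)=\int\!\!\int\II(\alpha-x)\,q_{X_{u}Y_{u}}(x,y)\,dx\,dy\) in \(u\) — the bias is \(\mathrm{O}(r^{2})\). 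For the variance, \(\EE\big(G_{r}(u,\xi)^{2}\big)=\tfrac1{r^{2}}\int h((x-\alpha)/r)^{2}\,m_{u}(x)\,dx=\tfrac1r\int h(s)^{2}\,m_{u}(\alpha+rs)\,ds=\tfrac1r\,m_{u}(\alpha)\int h(t)^{2}\,dt+\mathrm{o}(1/r)=\mathrm{O}(1/r)\), using continuity and finiteness of \(m_{u}\) at \(\alpha\) (i.e.\ a finite conditional second moment of \(\theta'_{u}\) given \(\theta(u,\cdot)=\alpha\), which I would add as an implicit standing assumption), while \(\big(\EE(G_{r}(u,\xi))\big)^{2}=(P'_{r}(u))^{2}=\mathrm{O}(1)\); subtracting, \(\var\big(G_{r}(u,\xi)\big)=\mathrm{O}(1/r)\).

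The changes of variable and the moment computations are routine. The delicate points will be justifying the Taylor expansions with remainders that are uniform in the integration variable — handled by the compact support of \(h\) and the \(C^{2}\) regularity of the densities — and carrying the derivatives through the integrals defining \(g_{u}\) and \(m_{u}\), which is exactly where the \(L^{1}\)-integrability of the density derivatives enters. The main obstacle I anticipate is controlling the extra weight \(y\) (respectively \(y^{2}\)) inside those integrals: the stated hypotheses bound the \(x\)-derivatives of \(q_{X_{u}Y_{u}}\) but not their products with \(y\), so making the argument fully rigorous requires the additional moment conditions on \(\theta'_{u}\) noted above.
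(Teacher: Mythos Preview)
Your proof is correct and follows essentially the same route as the paper's --- change of variable to scale out \(r\), Taylor expansion of the induced density, and use of the evenness of \(h\) to kill the first-order term --- with your \(\bar H\) and \(g_{u},m_{u}\) repackaging being a slightly more compact version of the paper's double-integral computations. Your caveat that a moment condition on \(\theta'_{u}\) (finiteness of \(\int y\,\partial_{x}^{j}q_{X_{u}Y_{u}}\,dy\) and \(\int y^{2}q_{X_{u}Y_{u}}\,dy\)) is implicitly needed is well taken: the paper's proof rests on exactly the same unstated hypothesis when it writes the remainder term \(\int y\,\partial_{x}^{2}q_{X_{u}Y_{u}}(\alpha,y)\,dy\) and bounds \(\int y^{2}q_{X_{u}Y_{u}}\,dy\).
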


\begin{proof}
Consider first \eqref{eq-tha}--\eqref{eq-ther}. With the induced probability law for the random variable \(X_{u}\), one has that
\begin{displaymath}
P_{r}(u)=\frac{1}{r}\int_{-\infty}^{+\infty}\int_{0}^{+\infty}h\Big(\frac{y-\alpha+x}{r}\Big)\, q_{X_{u}}(x)\,dy\,dx\,.
\end{displaymath}
Using Fubini theorem and the change of variable \(z=(y-\alpha+x)/r\) in the integral in \(x\) yields
\begin{displaymath}
P_{r}(u)=\int_{0}^{+\infty}\int_{-\infty}^{+\infty}h(z)\, q_{X_{u}}(rz-y+\alpha)\,dz\,dy\,.
\end{displaymath}
With the smoothness assumptions on \(q_{X_{u}}\), the Taylor expansion of this term for \(r\) near 0 yields
\begin{align*}
P_{r}(u)&=\int_{0}^{+\infty}\int_{-\infty}^{+\infty}h(z)\, \Big(q_{X_{u}}(\alpha-y)+r z\, q'_{X_{u}}(\alpha-y)+\frac{r^{2} z^{2}}{2}\,q''_{X_{u}}(\alpha-y)+\mathrm{O}(r^3)z^{3}\Big)\,dz\,dy\\
&=\int_{0}^{+\infty}q_{X_{u}}(\alpha-y)\,dy+\frac{r^{2}}{2}\sigma^2_{h}\int_{0}^{+\infty}\,q''_{X_{u}}(\alpha-y)\,dy+\mathrm{O}(r^3)
\end{align*}
by using \eqref{eq-prop} on the one hand and by introducing
\begin{equation}
\label{eq-sh2}
\sigma^2_{h}=\int_{-\infty} ^{+\infty}z^2\, h(z)\,dz
\end{equation}
on the other hand. The term of order 0 in \(r\) can be written as 
\begin{displaymath}
\int_{-\infty}^{\alpha} q_{X_{u}}(t)\,dt
\end{displaymath}
and, as such, is recognized to be equal to \(\PP(X_{u}\leq \alpha)\), that is, \(P(u)\). Therefore, \(P_{r}(u)\) differs from \(P(u)\) by an \(\mathrm{O}(r^{2})\) term (proportional to \(\sigma^2_{h}\)). 

As for the variance of the estimate~\eqref{eq-ther}, it is equal to the second order moment \(\EE\Big(\big(p_{r}(u,\xi)\big)^{2}\Big)\) from which the square of \(\EE\big(p_{r}(u,\xi)\big)\) must be subtracted. The latter is close to \(\big(P(u)^{2}\big)\) up to a term of order \(\mathrm{O}(r^{2})\). Therefore we concentrate on the second order moment which can be written, according to \eqref{eq-ther},
\begin{align*}
\EE\Big(\big(p_{r}(u,\xi)\big)^{2}\Big)&= \frac{1}{r^{2}} \EE\Bigg(\Big(\int_{0}^{+\infty}
h\Big(\frac{y-\alpha+\theta(u,\xi)}{r}\Big) \,dy\Big)^{2}\Bigg) \\
&= \frac{1}{r^{2}} \int_{-\infty}^{+\infty}\Big(\int_{0}^{+\infty}
h\Big(\frac{y-\alpha+x}{r}\Big) \,dy\Big)^{2}\,q_{X_{u}}(x)\,dx\\
&= \int_{-\infty}^{+\infty}\Big(\int_{\frac{x-\alpha}{r}}^{+\infty}
h(z) \,dz\Big)^{2}\,q_{X_{u}}(x)\,dx\\\intertext{using the change of variable \(z=(y-\alpha+x)/r\) in the integral in \(y\),}&\leq\int_{-\infty}^{+\infty}\Big(\int_{-\infty}^{+\infty}
h(z) \,dz\Big)^{2}\,q_{X_{u}}(x)\,dx\\\intertext{since \(h(\cdot)\geq 0\),}&=
1
\end{align*}
according to \eqref{eq-prop} (last equality) and the fact that \(q_{X_{u}}\) is a probability density. 

The proof regarding the bias of \(P'_{r}(u)\) w.r.t.~\(P'(u)\) may follow one of the following two paths: either a similar result is proved for the derivative of a function whenever the function itself is approximated by another function up to an \(\mathrm{O}(r^{2})\) term; or, with \eqref{eq-bias}--\eqref{eq-bias1}, we perform similar calculations to those we have just performed with \eqref{eq-tha}--\eqref{eq-ther}. Let us sketch this second path. Considering \eqref{eq-bias}--\eqref{eq-bias1} and the pair \(\big(X_{u}(\cdot), Y_{u}(\cdot)\big)\), we have that
\begin{displaymath}
P'_{r}(u)=-\frac{1}{r}\int\int h\Big(\frac{x-\alpha}{r}\Big)\,y\, q_{X_{u}Y_{u}}(x,y)\,dx\,dy
\end{displaymath}
(remember \(y\) may be a vector of the same dimension as \(u\) and \( d y\) should be understood adequately). From here, we proceed as previously with the change of variable \(z=(x-\alpha)/r\) which yields
\begin{displaymath}
P'_{r}(u)=-\int\int h(z)\,y\,q_{X_{u}Y_{u}}(rz +\alpha,y)\,dz\,dy\,.
\end{displaymath}
Then, a Taylor expansion of \(q_{X_{u}Y_{u}}\) w.r.t.~its first argument for  \(r\) near~0 yields, for the same reasons as previously,
\begin{equation}
P'_{r}(u)=-\int y\,q_{X_{u}Y_{u}}(\alpha,y)\,dy+\frac{r^2}{2}\sigma^2_h \int \frac{\partial^{2} q_{X_{u}Y_{u}}(\alpha,y)}{\partial x^{2}}\, y \, dy+ \mathrm{O}(r^{3})\,.\label{eq-question}
\end{equation}
Assuming that the first term in the right-hand side above is equal to \(P'(u)\) (see Claim~\ref{claim} hereafter), we obtain again that \(P'_{r}(u)\) differs by an \(\mathrm{O}(r^{2})\) term. 

 The variance is equal to the second order moment \(\EE\Big(\big(p'_{r}(u,\xi)\big)^{2}\Big)\) from which we must subtract \(\Big(\EE\big(p'_{r}(u,\xi)\big)\Big)^{2}\). The latter is close to \(\big(P(u)\big)^{2}\) up to \(\mathrm{O}(r^{2})\). As for the former, we have that
 \begin{align*}
\EE\Big(\big(p'_{r}(u,\xi)\big)^{2}\Big)&= \frac{1}{r^{2}}\int h^{2}\Big(\frac{\theta(u,\xi)-\alpha}{r}\Big)\big(\theta'_{u}(u,\xi)\big)^{2} q(\xi)\,d\xi\\
&=\frac{1}{r^{2}}\int \int h^{2}\Big(\frac{x-\alpha}{r}\Big)\,y^{2} q_{X_{u}Y_{u}}(x,y)\,dx\,dy\\
&=\frac{1}{r}\int \int h^{2}(z)\,y^{2} q_{X_{u}Y_{u}}(r z+\alpha,y)\,dz\,dy\,.
\end{align*}
From here, we proceed as earlier with a Taylor expansion for \(r\) close to~0, and it should be clear that the above expression is of order~\(1/r\) with a coefficient which can be bounded by a term proportional to the square of the \(L^{2}\) norm of \(h\). The same consideration is still valid for the variance itself.
\end{proof}

\begin{claim}\label{claim}
The first term in the right-hand side of \eqref{eq-question} is equal to \(P'(u)\). We sketch the proof of this fact here. For any smooth function \(f: \RR\to \RR \), consider
\begin{displaymath}
F(u)= \EE\Big(f\big(\theta(u,\xi)\big)\Big)= \int f\big(\theta(u,\xi)\big)\,q(\xi)\,d\xi=\int f(x)\,q_{X_{u}}(x)\,dx\,.
\end{displaymath}
Then,
\begin{displaymath}
F'(u)=  \int f'\big(\theta(u,\xi)\big)\,\theta'_{u}(u,\xi)\,q(\xi)\,d\xi=\int\int f'(x)\,y\,q_{X_{u}Y_{u}}(x,y)\,dx\,dy\,.
\end{displaymath}
Integrating by parts in the integral in \(x\), one gets
\begin{displaymath}
F'(u)=  -\int\int f(x)\,y\,\frac{\partial q_{X_{u}Y_{u}}}{\partial x}(x,y)\,dx\,dy\,.
\end{displaymath}
If \(f\) is not smooth enough for this calculation to be immediately justified, one can consider a sequence of smooth approximations converging to \(f\) in order to establish this formula. Let us now use it for \(f(\cdot)= \II(\alpha-\cdot)\). Then \(F(u)=P(u)\) and
\begin{align*}
P'(u)&= -\int y\int \II(\alpha-x)\,\frac{\partial q_{X_{u}Y_{u}}}{\partial x}(x,y)\,dx\,dy\\& =-\int y\int_{-\infty}^{\alpha} \frac{\partial q_{X_{u}Y_{u}}}{\partial x}(x,y)\,dx\,dy\\
&=-\int y \,q_{X_{u}Y_{u}}(\alpha,y)\,dy\,,
\end{align*}
which is exactly the expected result.
\end{claim}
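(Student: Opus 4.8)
The plan is to establish a general integration-by-parts identity for expectations of the form \(\EE\big(f(\theta(u,\xi))\big)\) with \(f\) smooth, and then to transfer it to the nonsmooth choice \(f=\II(\alpha-\cdot)\) by a mollification argument; the first term in \eqref{eq-question} is exactly what this identity produces for \(P(u)=\EE\big(\II(\alpha-\theta(u,\xi))\big)\).

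First I would fix a smooth bounded \(f:\RR\to\RR\) and set \(F(u)=\EE\big(f(\theta(u,\xi))\big)=\int f(\theta(u,\xi))\,q(\xi)\,d\xi\). Differentiating under the integral sign (legitimate by smoothness of \(f\) and of \(u\mapsto\theta(u,\xi)\)) gives \(F'(u)=\int f'(\theta(u,\xi))\,\theta'_u(u,\xi)\,q(\xi)\,d\xi\), and re-expressing this through the joint law of \(\big(X_u,Y_u\big)=\big(\theta(u,\cdot),\theta'_u(u,\cdot)\big)\) yields \(F'(u)=\int\!\!\int f'(x)\,y\,q_{X_uY_u}(x,y)\,dx\,dy\). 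Integrating by parts in the scalar variable \(x\) — the boundary terms vanishing because \(q_{X_uY_u}(\cdot,y)\) is a density whose first derivative lies in \(L^1\), hence tends to \(0\) at \(\pm\infty\) — gives the clean formula \(F'(u)=-\int\!\!\int f(x)\,y\,\partial_x q_{X_uY_u}(x,y)\,dx\,dy\). When \(u\) is vector-valued, \(y\) and \(F'\) are vector-valued and the identity is read componentwise; \(x\) stays one-dimensional, so this is genuinely a one-dimensional integration by parts.

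Next I would approximate \(\II(\alpha-\cdot)\) by a sequence \(f_n\) of smooth bounded functions with \(0\le f_n\le 1\) and \(f_n\to\II(\alpha-\cdot)\) pointwise (for instance the convolution smoothings of \S\ref{sec-gen} with parameter \(1/n\)). On one side, \(F_n(u)=\EE\big(f_n(\theta(u,\xi))\big)\to P(u)\) by dominated convergence; on the other, the right-hand sides \(-\int\!\!\int f_n(x)\,y\,\partial_x q_{X_uY_u}(x,y)\,dx\,dy\) converge, again by dominated convergence using the \(L^1\) control on \(\partial_x q_{X_uY_u}(\cdot,y)\) and integrability in \(y\), to \(-\int\!\!\int \II(\alpha-x)\,y\,\partial_x q_{X_uY_u}(x,y)\,dx\,dy\). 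Identifying the common limit with \(P'(u)\) and then evaluating the integral — \(\int_{-\infty}^{\alpha}\partial_x q_{X_uY_u}(x,y)\,dx=q_{X_uY_u}(\alpha,y)\) by the fundamental theorem of calculus, the lower limit contributing \(0\) — yields \(P'(u)=-\int y\,q_{X_uY_u}(\alpha,y)\,dy\), which is precisely the first term of \eqref{eq-question}.

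The delicate step, and the one I expect to be the real obstacle, is justifying that the common limit of the \(F_n(u)\) and of their derivatives equals \(P'(u)\), i.e.\ interchanging \(n\to\infty\) with \(d/du\). Pointwise convergence \(F_n\to P\) does not by itself give \(F_n'\to P'\); one needs a domination of \(F_n'(u)=-\int\!\!\int f_n(x)\,y\,\partial_x q_{X_uY_u}(x,y)\,dx\,dy\) uniform in a neighborhood of \(u\), and this is exactly what the hypothesis of Theorem~\ref{th-1} — that \(q_{X_uY_u}\) be twice continuously differentiable in \(x\) with integrable \(L^1\) derivatives — is designed to provide. An alternative that avoids differentiating through the limit would be to start from \(P(u)=\int_{-\infty}^{\alpha}q_{X_u}(x)\,dx\) and differentiate under the integral, but then one must separately relate \(\partial_u q_{X_u}\) to \(q_{X_uY_u}\); the mollification route seems cleaner.
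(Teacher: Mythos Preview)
Your proposal is correct and follows exactly the same route as the paper's sketch: differentiate \(F(u)=\EE\big(f(\theta(u,\xi))\big)\) under the integral, integrate by parts in the scalar variable \(x\) against the joint density \(q_{X_uY_u}\), and then specialize to \(f=\II(\alpha-\cdot)\) via smooth approximation. You add welcome detail the paper leaves implicit --- the vanishing of boundary terms, the dominated-convergence passage to the limit, and the identification of \(\lim_n F_n'\) with \(P'\) --- but the argument is the same.
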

\begin{remark}\label{rem-rem}
As a side remark,
observe that
\begin{displaymath}
q_{X_{u}}(\alpha)=\int q_{X_{u}Y_{u}}(\alpha,y)\, dy\,,
\end{displaymath}
and that
\begin{displaymath}
q_{X_{u}Y_{u}}(\alpha,y)/q_{X_{u}}(\alpha)=q_{Y_{u}}(y\mid X_{u}=\alpha)\,,
\end{displaymath}
that is, the conditional density of \(Y_{u}\) knowing that \(X_{u}=\alpha\). Therefore, the first term in the right-hand side of \eqref{eq-question} can be written as \(-\EE\big(Y_{u}\mid X_{u}=\alpha\big)\times q_{X_{u}}(\alpha)\). We conclude that 
\begin{displaymath}
P'(u)=- q_{\theta(u,\cdot)}(\alpha)\times \EE\big(\theta'_{u}(u,\cdot)\mid \theta(u,\cdot)=\alpha\big) \,.
\end{displaymath}
\end{remark}

\begin{remark}
Observe that, although we started with the idea of a smooth function~\(h\), the expression~\eqref{eq-bias} of the estimate and the analysis in the proof of Theorem~\ref{th-1} does not involve more than the function \(h\) itself (not its derivatives), so that we may as well consider non smooth functions (and even discontinuous functions at the ends of its support).
\end{remark}

In conclusion, the variance of the stochastic estimate~\eqref{eq-bias} blows up like \(A/r\) as \(r\) goes to 0 (where \(A\) can be bounded from above by something proportional to the square of the \(L^{2}\) norm of \(h\)), that of \eqref{eq-ther} remains of order \(\mathrm{O}(1)\), whereas the square of the bias of both estimates goes to 0 as \(B^2 r^{4}\) (where \(B\) is proportional to \(\sigma^{2}_{h}\) --- see \eqref{eq-sh2}). If the estimate of \(P'(u)\) is rather based on the average of \(N\)~expressions as \eqref{eq-bias} for \(N\)~independent drawings of \(\xi\), the variance will blow up as \(A/(N r)\)  whereas the square of the bias will still behave as \(B^2 r^{4}\). Therefore, the best trade-off between variance and bias is realized by that \(r\) which minimizes the mean square error (MQE; this is the sum of the variance and of the square of the bias) equal to \(A/(Nr)+ B^2 r^{4}\): the ``best'' \(r\) is thus \(\big(A/(4 B^2 N)\big)^{1/5}\). This yields a MQE estimated to \((5 A^{4/5}B^{2/5})/(4 N)^{4/5}\). Therefore, in the choice of function~\(h\), it is meaningful to pay attention to the quantity \(\sigma^{4/5}_{h}\|h\|_{L^2}^{8/5}\). Remember \(B\) is proportional to \(\sigma^{2}_{h}\) and \(A\) is proportional to \(\|h\|^{2}_{L^{2}}\).

The bias of the AC estimate goes to 0 with \(r\): this parameter~\(r\)
allows for a trade-off between mean and variance which should be adapted to the number of samples available (as just discussed) or visited in one run in the context of an iterative algorithm, as discussed later on in \S\ref{sec-rate}.

\subsubsection{Practical Aspects and Application to Example of \S\ref{sec-example}}\label{sec-sec}
Define
\begin{displaymath}
I(x)=\begin{cases}
   1   & \text{if} -1\leq x \leq 1, \\
    0  & \text{otherwise}.
\end{cases}
\end{displaymath}
Table~\ref{tab-funct} proposes 6~functions with bounded supports that can play the role of function~\(h\) (see \eqref{eq-prop}) and compares them from the point of view of their constants \(\sigma^{4/5}_{h}\|h\|_{L^2}^{8/5}\) (last column), the relevance of which has just been explained. The column \(h(0)\) is provided to help identifying the functions with their graphs displayed in Figure~\ref{fig-mollifunct}.
\begin{table}[hbtp]
$$
\begin{array}{|c|c|c|c|c|}\hline
\rule[-2ex]{0pt}{4ex}h&h(0)&\sigma^2_{h}&\|h\|^2_{L^2}&\sigma^{4/5}_{h}\|h\|_{L^2}^{8/5}\\\hline
\hline I(x)&0.5000&0.3333&0.5000&0.3701\\
\hline (1-|x|)I(x)&1.000&0.1667&0.6667&0.3531\\
\hline \pi\cos(\pi x/2)I(x)/4&0.7854&0.1894&0.6169&0.3492\\
\hline 3(1-x^2)I(x)/4&0.7500&0.2000&0.6000&0.3491\\
\hline 15(1-x^2)^2 I(x)/16&0.9375&0.1429&0.7143&0.3508\\
\hline 35(1-x^2)^3 I(x)/32&1.0938&0.1111&0.8159&0.3529\\\hline
\end{array}
$$\caption{Various \(h\) functions}\label{tab-funct}\end{table}
\begin{figure}[hbtp]
\centering
 \includegraphics{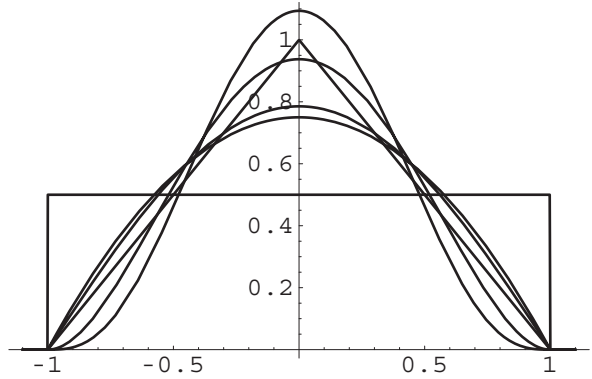}
 \caption{Several possible \(h\) functions}
\label{fig-mollifunct}
\end{figure}

We observe that the fourth function, namely \(h(x)=3(1-x^2)I(x)/4\) is the one to retain because it offers the smallest value in the last column of the table. We now apply the technique to the example of \S\ref{sec-example} again. The estimates for \(P'_{u}(u,v)\) and \(P'_{v}(u,v)\) based on this technique and on a given sample~\(\xi\) read as follows:
\begin{subequations}\label{eq-estuv}
\begin{align}
(p_{r})'_{u}(u,v,\xi)=\frac{1+b}{r}\,h\Big(\frac{(1+\alpha)-(1+b)u-(1+\xi)v}{r}\Big)\,;\\
(p_{r})'_{v}(u,v,\xi)=\frac{1+\xi}{r}\,h\Big(\frac{(1+\alpha)-(1+b)u-(1+\xi)v}{r}\Big)\,.
\end{align}
\end{subequations}
This MQE will be compared with that obtained by the next approach, namely finite differences.

We performed some exact computations of bias and variance with the help of \emph{Mathematica} for those estimates evaluated at the optimal solution (see \eqref{eq-sharp}) and with \(h\) equal to the fourth function in Table~\ref{tab-funct}. We have found:
\begin{align*}
\EE(p_{r})'_{u}(u^{\sharp},v^{\sharp},\cdot)& =0.62 - 
    0.096 r^2 + 0.012 r^4\,,\\
\var(p_{r})'_{u}(u^{\sharp},v^{\sharp},\cdot)  & = 0.45/r-0.39 - 
      0.05 r + \mathrm{O}(r^{2})\,,\\
\EE(p_{r})'_{v}(u^{\sharp},v^{\sharp},\cdot)& = 1.18 - 
    0.36 r^2 + 0.06 r^4\,,\\
\var(p_{r})'_{v} (u^{\sharp},v^{\sharp},\cdot) & =1.62/r -1.39  - 
      0.35 r + \mathrm{O}(r^{2})\,.
\end{align*}
If the estimates are based on the average over \(N\)~samples, the MQE of the AC estimates are obtained by considering \(\var(r)/N +(\EE(r))^2 -(\EE(0))^2\). In those expressions, we consider the terms in \(1/Nr\) and \(r^4\) \emph{only} in order to tune \(r\) as a function of \(N\). This computation is done for the sum of the MQE's related to the two components of \(p_{r}'\) (that is, for the mean square norm of the vector estimate error --- we denote it \(\mathrm{MQE}(r,N)\)). This yields \(r= 1.30\, N^{-1/5}\). Finally, we plug this value of \(r\) into \(\mathrm{MQE}(r,N)\) to get the following function of \(N\) (again, calculations are exact, up to \emph{Mathematica} accuracy, even if results are displayed in a truncated form):
\begin{equation}
 \frac{1.98}{N^{4/5}}-\frac{1.78}{N}+\mathrm{O}(N^{-6/5})\;.\label{eq-mqe}
\end{equation}

\subsection{Finite Differences (FD)}
\subsubsection{General Theory}
The idea here is simply to evaluate the derivative w.r.t.~each component \(u_{j}\) of the expression inside expectation in \eqref{eq-psi} by the  variation of this quantity, caused by, and divided by, the symmetric variation \((u_{j}+c)- (u_{j}-c)=2c\) for a sample \(\xi\). We denote \(\one_{j}\) the vector of the same dimension as \(u\) with a 1 in the \(j\)-th component and 0 elsewhere. The FD stochastic estimate of \(P'_{u_{j}}\) is
\begin{equation}
\label{eq-findif}
\gradif{u_{j}}(u,\xi)=\frac{\II\big(\alpha-\theta(u+c\one_{j},\xi)\big)-\II\big(\alpha-\theta(u-c \one_{j},\xi)\big)}{2c}\,.
\end{equation}
It is wise to use the same sample \(\xi\) for the evaluation at \(u+c\) and \(u-c\) in order to reduce variance. A symmetric difference around \(u\) is also recommended. We notice that \cite{LecYin-1997} includes FD under a.s.\ continuity assumptions, which is not our case here, because the indicator functions are discontinuous.

The following theorem provides the analysis of bias and variance of this estimate w.r.t.~the parameter~\(c\).

\begin{theorem}
\label{theo-FD}
If \(P\) (see \eqref{eq-psi}) is three times continuously differentiable with bounded derivatives, the expression~\eqref{eq-findif} provides a biased estimate of \(P'_{u_{j}}\) with a bias in \(\mathrm{O}(c^{2})\).
 If
\begin{description}
\item[(H1)] \(\theta(\cdot,\xi)\) is differentiable with derivatives bounded uniformly in \(\xi\);
\item[(H2)] the probability measure of \(\xi\) has a density;
  \item[(H3)] \(\theta(u,\cdot)\) is twice differentiable and, for all \(u\), and for every solution \(\hat{\xi}\) of \(\theta(u,\xi)=\alpha\), we have that \(\theta'_{\xi}(u,\hat{\xi})\neq 0\);
  \end{description}
  then the variance of estimate~\eqref{eq-findif} is in \(\mathrm{O}(c^{-1})\).

If \emph{\textbf{(H1)}} and \emph{\textbf{(H2)}} still hold true but \emph{\textbf{(H3)}} is replaced by
\begin{description}
  \item[(H4)] \(\theta(u,\cdot)\) is three times  differentiable  and, whenever \(\theta(u,\hat{\xi})=\alpha\) for some \(\hat{\xi}\), and \(\theta'_{\xi}(u,\hat{\xi})=0\), we have that \(\theta''_{\xi^{2}}(u,\hat{\xi})\neq 0\);
  \end{description}
then the variance of estimate~\eqref{eq-findif} is in \(\mathrm{O}(c^{-3/2})\).

Finally, under no particular assumptions on $g$, the best bound for the variance is in \(\mathrm{O}(c^{-2})\). 
\end{theorem}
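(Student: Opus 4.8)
\emph{Overall strategy.} The plan is to analyse bias and variance separately, and to reduce the variance bound to a single question: how much mass does the law of $\theta(u,\xi)$ put on an interval of width $\mathrm{O}(c)$ around the threshold $\alpha$. For the bias, nothing has to be interchanged: by linearity of the expectation and the definition~\eqref{eq-psi},
\[
\EE\bigl(\gradif{u_{j}}(u,\xi)\bigr)=\frac{\PP\bigl(\theta(u+c\one_{j},\xi)\le\alpha\bigr)-\PP\bigl(\theta(u-c\one_{j},\xi)\le\alpha\bigr)}{2c}=\frac{P(u+c\one_{j})-P(u-c\one_{j})}{2c}.
\]
Since $P$ is $C^{3}$ with bounded third derivatives, a Taylor expansion of this symmetric difference quotient about $u$ gives $\EE\bigl(\gradif{u_{j}}(u,\xi)\bigr)=P'_{u_{j}}(u)+\mathrm{O}(c^{2})$, so the bias is $\mathrm{O}(c^{2})$; this step uses only the smoothness of $P$, not \textbf{(H1)}--\textbf{(H4)}.

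\emph{Reduction for the variance.} Write $Z=\gradif{u_{j}}(u,\xi)$; since $\var(Z)\le\EE(Z^{2})$ it suffices to bound the second moment. The numerator in~\eqref{eq-findif} takes values in $\{-1,0,1\}$ and is nonzero exactly on the event $A_{c}$ that precisely one of $\theta(u+c\one_{j},\xi)\le\alpha$ and $\theta(u-c\one_{j},\xi)\le\alpha$ holds, so $\EE(Z^{2})=\PP(A_{c})/(4c^{2})$. The trivial bound $\PP(A_{c})\le 1$ already yields the last assertion, a variance in $\mathrm{O}(c^{-2})$ with no assumption. Under \textbf{(H1)} the map $t\mapsto\theta(u+t\one_{j},\xi)$ is Lipschitz with a constant $M$ independent of $\xi$, hence both $\theta(u\pm c\one_{j},\xi)$ lie within $Mc$ of $\theta(u,\xi)$; on $A_{c}$ the level $\alpha$ lies between them, so $A_{c}\subseteq\{\,|\theta(u,\xi)-\alpha|\le Mc\,\}$ and $\PP(A_{c})\le\PP\bigl(|\theta(u,\xi)-\alpha|\le Mc\bigr)$.

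\emph{Mass near the threshold.} Fix $u$, set $\vartheta=\theta(u,\cdot)$ with root set $S=\{\vartheta=\alpha\}$; away from $S$ the quantity $|\vartheta-\alpha|$ is bounded below, so only neighbourhoods of $S$ contribute. Near a root $\hat\xi$ with $\vartheta'(\hat\xi)\neq0$ (case \textbf{(H3)}) the inverse function theorem makes $\vartheta$ a local diffeomorphism, so $\{\,|\vartheta-\alpha|\le\delta\,\}$ is locally an interval of length $\mathrm{O}(\delta)$ and, by \textbf{(H2)}, of probability $\mathrm{O}(\delta)$ --- equivalently the pushforward law of $\vartheta(\xi)$ has near $\alpha$ the locally bounded density $\sum_{\hat\xi\in S}q(\hat\xi)/|\vartheta'(\hat\xi)|$, in the spirit of Remark~\ref{rem-rem}. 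Near a degenerate root $\hat\xi$ with $\vartheta'(\hat\xi)=0$ but $\vartheta''(\hat\xi)\neq0$ (case \textbf{(H4)}), the second--order Taylor formula gives $|\vartheta(\xi)-\alpha|\ge\tfrac14|\vartheta''(\hat\xi)|\,(\xi-\hat\xi)^{2}$ for $\xi$ close to $\hat\xi$, so $\{\,|\vartheta-\alpha|\le\delta\,\}$ is locally contained in an interval of length $\mathrm{O}(\sqrt\delta)$, of probability $\mathrm{O}(\sqrt\delta)$ --- equivalently the pushforward law has near $\alpha$ at worst an integrable $|x-\alpha|^{-1/2}$ singularity. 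Summing over $S$ therefore gives $\PP(|\theta(u,\xi)-\alpha|\le\delta)=\mathrm{O}(\delta)$ under \textbf{(H3)} and $\mathrm{O}(\sqrt\delta)$ under \textbf{(H4)}; combined with the reduction above this yields $\var(Z)=\mathrm{O}(c^{-1})$ under \textbf{(H1)}--\textbf{(H3)} and $\var(Z)=\mathrm{O}(c^{-3/2})$ under \textbf{(H1)}, \textbf{(H2)}, \textbf{(H4)}.

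\emph{Main obstacle.} The routine parts are the bias expansion and the algebraic identity $\EE(Z^{2})=\PP(A_{c})/(4c^{2})$. The genuinely delicate step is passing from the \emph{local} pictures around $S$ to a \emph{global} bound on $\PP(|\theta(u,\xi)-\alpha|\le\delta)$: one must rule out accumulation of (degenerate) roots and control the tails where the density of $\xi$ may be heavy, and one wants the constants to be uniform over the range of $u$ actually visited. This is where the standing regularity of $\theta$ and the uniformity built into \textbf{(H1)} are used; if necessary one adds a mild properness/tail condition so that, in each case, the relevant piece of the pushforward density of $\theta(u,\xi)$ near $\alpha$ (bounded under \textbf{(H3)}, with a square--root singularity under \textbf{(H4)}) can be integrated against to produce the stated $\mathrm{O}(\delta)$, respectively $\mathrm{O}(\sqrt\delta)$, estimate.
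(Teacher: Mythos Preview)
Your proposal is correct and follows essentially the same route as the paper: Taylor expansion of the symmetric quotient for the bias, the identity $\EE(Z^{2})=\PP(A_{c})/(4c^{2})$ together with \textbf{(H1)} to reduce the variance to $\PP\bigl(|\theta(u,\xi)-\alpha|\le Kc\bigr)$, and then a local Taylor analysis around the roots $\hat\xi$ of $\theta(u,\cdot)=\alpha$ to extract $\mathrm{O}(c)$ under \textbf{(H3)} and $\mathrm{O}(\sqrt{c})$ under \textbf{(H4)}. One small remark: your level-set step is phrased for scalar $\xi$ (inverse function theorem, intervals of length $\mathrm{O}(\delta)$), whereas the paper allows $\xi\in\RR^{m}$ and speaks of an $(m{-}1)$-dimensional solution manifold with ``thickness'' $\mathrm{O}(c)$ (resp.\ $\mathrm{O}(\sqrt{c})$) in the direction of $\theta'_{\xi}$ (resp.\ transverse to the kernel of $\theta''_{\xi^{2}}$); the argument is the same, only the geometry is higher-dimensional.
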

\begin{proof}
With the smoothness assumption on \(P\), one has that
\begin{align*}
\EE\gradif{u_{j}}(u,\cdot)-P'_{u_{j}}(u)&=\frac{P(u+c\one_{j})-P(u-c \one_{j})-2c\,P'_{u_{j}}(u)}{2c}\\
&=    \frac{c^2}{6} P'''_{u_{j}^{3}}(u) + \mathrm{O}(c^3)\,,
\end{align*}
which proves the claim on the bias.

To evaluate the variance of \eqref{eq-findif}, we study its second order moment which differs from the variance by \(\big(P'_{u_{j}}(u)\big)^{2}\) up to terms in \(\mathrm{O}(c^{2})\) as we have just seen. Consider
\begin{align*}
\EE\Big(\gradif{u_{j}}(u,\xi)\Big)^2&=\EE\Bigg(\frac{\II\big(\alpha-\theta(u+c\one_{j},\xi)\big)-\II\big(\alpha-\theta(u-c \one_{j},\xi)\big)}{2c}\Bigg)^{2}\\
&=\frac{1}{4c^2}\Big(\PP\big(\{\theta(u+c\one_{j},\xi)\leq \alpha\}\cap\{\theta(u-c\one_{j},\xi)>\alpha\}\big)\\&\qquad\qquad+\PP\big(\{\theta(u+c\one_{j},\xi)> \alpha\}\cap\{\theta(u-c\one_{j},\xi)\leq \alpha\}\big)\Big)\,,
\end{align*}
those two events being of course disjoint. 

Using the mean value theorem (or Taylor representation) for the function $\theta(\cdot,\xi)\in C^1$, we have that \(\theta(u+ c\one_{j},\xi) = \theta(u,\xi) + c \theta'_{u_{j}}\big(v^+(\xi),\xi\big)\), and similarly \(\theta(u- c\one_{j},\xi) = \theta(u,\xi) - c \theta'_{u_{j}}\big(v^-(\xi),\xi\big)\). Therefore,
\begin{multline}
\EE\Big(\gradif{u_{j}}(u,\xi)\Big)^2=\frac{1}{4c^2}\Bigg(\PP\Big(\{\alpha+c \theta'_{u_{j}}\big(v^-(\xi),\xi\big) <\theta(u,\xi)\leq \alpha-c \theta'_{u_{j}}\big(v^+(\xi),\xi\big)\}\Big)\\+\PP\Big(\{\alpha-c \theta'_{u_{j}}\big(v^+(\xi),\xi\big) <\theta(u,\xi)\leq \alpha+c \theta'_{u_{j}}\big(v^-(\xi),\xi\big)\}\Big)\Bigg)\,,\label{eq-eval}
\end{multline}
Thanks to \textbf{(H1)}, we can bound each of the above two probabilities by 
\begin{equation}
\label{eq-pbound}
\PP\big(\{\theta(u,\xi) \in (\alpha - Kc, \alpha+ Kc]\}\big)\,,
\end{equation}
where $K$ is the uniform bound on $\theta'_{u_{j}}$.

Our goal is now to evaluate the behavior of this probability when \(c\) is approaching 0. Let \(m\) be the dimension of \(\xi\); \(g\) is supposed to be \(\RR\)-valued. Consider any solution~\(\hat{\xi}\) of\begin{equation}
\label{eq-sol}
\theta(u,\xi)=\alpha\,.
\end{equation}
The case when no such solution exists for some \(u\) will be discussed later on at Remark~\ref{rem-strange}.
If \(\theta'_{\xi}(u,\hat{\xi})\neq 0\) as assumed in \textbf{(H3)}, then the manifold of solutions of \eqref{eq-sol} is locally of dimension less than or equal to \(m-1\). The set of \(\xi\)'s involved in the event in~\eqref{eq-pbound} is locally a set with a ``backbone'' given by this manifold around \(\hat{\xi}\), and a ``thickness'' which is proved to be  of order~\(\mathrm{O}(c)\). Indeed, with a Taylor expansion of \(\theta(u,\cdot)\) around \(\hat{\xi}\), we get
\begin{displaymath}
\theta(u,\hat{\xi}+y)=\alpha+\scal{\theta'_{\xi}(u,\hat{\xi})}{y}+\mathrm{O}(\|y\|^{2})\,.
\end{displaymath}
In this expression, we need only consider \(y\)'s which are (asymptotically as \(c\) goes to 0) parallel to the gradient \(\theta'_{\xi}(u,\hat{\xi})\) (that is, the component in the kernel of the linear form defined by this gradient is useless). It should now be obvious that to match variations of \(g\) around \(\alpha\) which are of order \(c\), we need only consider \(y\)'s which are also of order \(c\) in norm. If this holds true for any \(\hat{\xi}\) in the manifold of solutions of \eqref{eq-sol}, then the probability~\eqref{eq-pbound} is of order \(\mathrm{O}(c)\) and  the second order moment~\eqref{eq-eval} of our estimate (and consequently the variance too) is bounded by an \(\mathrm{O}(c^{-1})\).

If \textbf{(H3)} does not hold but \textbf{(H4)} does, then the same reasoning can be repeated (for a Taylor expansion of the next order) with \(y\)'s which are now orthogonal to the kernel of the Hessian~\(\theta''_{\xi^{2}}(u,\hat{\xi})\) (this component is non zero thanks to \textbf{(H4)}) and it should be clear that to compensate for variations of \(g\) of order \(c\), we now need \(y\)'s which are of order \(\mathrm{O}(c^{1/2})\) in norm. This also gives the order of the probability~\eqref{eq-pbound} and then, the bound on the variance is in \(\mathrm{O}(c^{-3/2})\).

We could continue like that by removing assumption~\textbf{(H4)} but introducing an assumption~\textbf{(H5)}, and so on and so forth. Ultimately, with no particular assumptions, \eqref{eq-pbound}
is of order \(\mathrm{O}(1)\) and the variance is of order \(\mathrm{O}(c^{-2})\).\end{proof}

\begin{remark}\label{rem-strange}
Suppose that for some \(u\), there exists no solutions to \eqref{eq-sol}. Then, since \(g\) is assumed to be at least continuous in \(\xi\), this means that for all \(\xi\), \(\theta(u,\xi)\) is always either strictly less or strictly greater than \(\alpha\), in which cases \(P(u)\) (see \eqref{eq-psi}) assumes either the value 1 or 0 (which are extreme values for \(P\)). 

If \(\theta(u,\cdot)\) can be bounded away from \(\alpha\), then the probability~\eqref{eq-pbound} will be 0 for \(c\) small enough. This is the good case for the variance of the estimate. But \(\theta(u,\cdot)\) may also approach \(\alpha\) asymptotically, and, with heavy tails for the density~\(q\) of \(\xi\), it is not possible to give a better bound for \eqref{eq-pbound} than \(\mathrm{O}(1)\). Here is an example. Let \(\theta(u,\xi)=u-e^{-\xi}\) (\(u\) and \(\xi\) are both scalar). Consider the probability \(\PP(\theta(0,\xi))\in [-c,c]\) for \(c\) small, that is, \(\PP(\xi\geq -\ln c)\). Assume the density \(q(\xi)\) has a positive support and that it is equal to \(a (1+\xi)^{-(1+a)}\, \II(\xi)\) with \(a\) an arbitrary small positive number. Then \(\PP(\xi\geq -\ln c)= (1-\ln c)^{-a}\). For \(c\) positive and below \(e\), \((1-\ln c)^{-1}\geq c\), hence this probability is larger than \(c^{a}\). Since \(a\) is positive and arbitrarily small, we cannot clearly make this case enter the case of better bounds obtained with assumptions \textbf{(H3)} or \textbf{(H4)}. 
\end{remark}
\subsubsection{Application to the Example and Comparison with the AC Method}
We have used \eqref{eq-findif} (for the two components of the gradient, that in \(u\) and that in \(v\)) to our example and evaluated, once again with the help of \emph{Mathematica}, the mean and variance of those estimates at the optimal solution~\eqref{eq-sharp}. The results are as follows:
\begin{align*}
\EE\gradif{u}(u^{\sharp},v^{\sharp},\cdot)& =0.62-0.23 c^2+0.06 c^4+\mathrm{O}(c^7)\,,\\
\var\gradif{u}(u^{\sharp},v^{\sharp},\cdot)  & = \frac{0.31}{c} - 
    0.39- 0.12 c +
      \mathrm{O}(c^2)\,,\\
\EE\gradif{v}(u^{\sharp},v^{\sharp},\cdot) & = 1.18 - 1.49 c^2-42.25 c^4 -
     199.41 c^6+\mathrm{O}(c^7) \,,\\
\var\gradif{v}(u^{\sharp},v^{\sharp},\cdot)  & =\frac{0.59}{c} - 
    0.39- 0.74 c +
      \mathrm{O}(c^2)\,.
\end{align*}
Following the same procedure as for the AC estimate, the MQE for the gradient vector estimate based on \(N\) independent samples is obtained by \(\var(c)/N +( \EE(c))^2-( \EE(0))^2\); in this expression,  the dominant terms in \(1/Nc\) and in \(c^4\) only are retained to tune \(c\) as a function of~\(N\). This yields \(c=0.63 N^{-1/5}\) and an optimal MQE equal to:
\begin{equation}
 \frac{1.79}{N^{4/5}}-\frac{1.78}{N}+\mathrm{O}(N^{-6/5})\;.\label{eq-mqedif}
\end{equation}
Compared with \eqref{eq-mqe} which was obtained with the AC estimate, this is asymptotically slightly better. However, a more careful inspection with complete expressions of the MQEs shows that this conclusion becomes true only for \(N\) above about 11000. Hence one may say that the AC and the FD  methods yield approximately the same performances.

\section{Convergence Analysis}\label{section5}
\subsection{Stochastic Algorithms}
Consider algorithm~\eqref{eq-ahs}. With \(\Theta(u)=\EE \big(\theta(u,\xi)\big)\) and \(J(u)=\EE j(u,\xi)\), an equilibrium point \((u^{\sharp},\lambda^{\sharp})\) of this algorithm solves the system of Kuhn-Tucker optimality conditions of problem~\eqref{eq-pb}: for all positive \(\varepsilon\) and \(\rho\),
\begin{subequations}\label{eq-KT}
\begin{align}
& u^{\sharp}=\Pi_{U^{\mathrm{ad}}} \Big(
u^{\sharp}-\varepsilon\big(\nabla_{u}J(u^{\sharp})+\nabla_{u}\Theta(u^{\sharp})\,\lambda^{\sharp}\big) \Big)\,,\label{eq-KT-a}\\
& \lambda^{\sharp}=\Pi_{+}\Big(
\lambda^{\sharp}+\rho\big(\Theta(u^{\sharp})-\alpha\big) \Big)\,.\label{eq-KT-b}
\end{align}
\end{subequations}
We will write algorithm~\eqref{eq-ahs} (with \(\rho^{k}\) proportional to \(\varepsilon^{k}\)) more compactly: we set \(x=(u,\lambda)\) and write
\begin{equation}
\label{eq-sa}
x^{k+1} = \Pi(x^k - \varepsilon^k \, \psi^k)\,,
\end{equation}
where \(\Pi\) stands for the projection operation on \(\Uad\times\RR^{d}_{+} \) and \(\psi^k\) is driven by an underlying process of i.i.d.~drawings \(\xi^{k+1}\), independent of \(\{x^{i}\}_{i\leq k}\). Let $\Field^{k}$ be the filtration generated by \(\{x^{k},\{\xi^{i}\}_{i\leq k}\}\) so that $\psi^k$ and $x^{k+1}$ are $\Field^{k+1}$ measurable.

With the stochastic estimates produced by the AC and FD techniques considered so far in this paper, we obtained \emph{biased} estimates of \(\nabla_{u}\Theta\) (and the bias sometimes also affects the estimate of \(\Theta\) itself), with a bias going to 0 	as \(k\to +\infty\). We will denote \(\Psi(x^{k})\) the correct value of the vector field at \(x^{k}\), namely
\begin{subequations}\label{eq-moyen}
\begin{align}
&\nabla J(u)+\nabla \Theta(u)  \lambda\,,  \\
&\alpha-\Theta(u)\,,  
\end{align}
\end{subequations}
that with which an equilibrium point satisfies (see \eqref{eq-KT}):
\begin{equation}
\label{eq-equil}
x^{\sharp}=\Pi\big(x^{\sharp}-\varepsilon\Psi(x^{\sharp})\big)
\end{equation}
for all positive \(\varepsilon\). 

Define the martingale difference $\Delta M^{k}$, the bias $B^{k}$ and the variance $V^{k}$ of $\{\psi^k\}$ by:
\begin{subequations}\label{eq-quant}
\begin{align}
&\Delta M^{k} = \psi^k - \EE(\psi^k \mid \Field^{k})\,,\\
&B^{k} = \EE(\psi^k\mid\Field^{k}) - \Psi(x^k)\,,\\
&V^{k} = \EE\|\psi^k - \EE(\psi^k\mid \Field^{k})\|^2\,.
\end{align}
\end{subequations}
We will use references \cite{KusYin:2003} and \cite{LecYin-1997} in which convergence results and convergence rates of algorithm~\eqref{eq-sa} are provided. Essentially, if the nonlinear projection operation at the r.h.s.~of \eqref{eq-sa} is missing, under conditions on the quantities~\eqref{eq-quant} in connection with the step size \(\varepsilon^{k}\) that we will recall below, the trajectory produced by \eqref{eq-sa} behaves a.s.\ as that of the deterministic ODE:
\begin{subequations}\label{eq-ODE}
\begin{equation}
\label{eq-ODEa}
\dot{x}= -\Psi(x)\,.
\end{equation}
In the presence of the projection onto a closed convex set, the differential equation is more complex to write since it involves another process \(z\) in which \(z\) takes values in the orthogonal cone~\(C(x)\) to the convex set at the current point~\(x\) (hence this process effectively appears only at the border of the convex set). The ODE now reads
\begin{equation}
\label{eq-ODEb}
\dot{x}= -\Psi(x)-z\,,\quad z\in C(x)\,.\end{equation}
\end{subequations}
The role of \(z\) is to maintain \(x\) in the convex set, as it is the case for \(x^{k}\) produced by \eqref{eq-sa}. It is defined as the ``minimum force'' which achieves this goal.

\subsection{Convergence}
In this subsection, we recall the conditions which ensure that the stochastic Arrow-Hurwicz algorithm will behave as its ODE~\eqref{eq-ODE} and we refer to the previous subsection to deduce that primal iterates~\(u^{k}\) will converge, at least locally, towards the solution~\(u^{\sharp}\) (assumed unique) of the constrained optimization problem. We then apply those results to the case of biased gradient estimates provided by AC and FD methods to derive a policy on how to tune the parameters \(r\) (see \eqref{eq-bias}) and \(c\) (see \eqref{eq-findif}) as functions of the iteration index~\(k\) in order to satisfy the convergence conditions.

\begin{lemma}\label{lem-conv}
Consider the iteration~\eqref{eq-sa} and assume that 
\begin{subequations}\label{eq-assump}
\begin{align}
&\sum_k \varepsilon^k = +\infty\,,\label{eq-assump-a} \\ 
&\sum_k \varepsilon^k \| B^k\| <\infty \quad\text{a.s.},\label{eq-assump-b}\\
&\sum_k (\varepsilon^k)^2\, V_k <\infty\,.\label{eq-assump-c}\end{align} 
\end{subequations}
Then, a.s.,  $x^k$ has the same asymptotic behavior as the solution of \eqref{eq-ODE}. 
\end{lemma}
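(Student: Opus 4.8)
The plan is to reduce the statement to a standard stochastic‑approximation theorem — specifically the Kushner–Yin ODE method for projected recursions, as in \cite{KusYin:2003} — by showing that the three hypotheses \eqref{eq-assump-a}--\eqref{eq-assump-c}, together with the decomposition $\psi^k = \Psi(x^k) + B^k + \Delta M^k$ from \eqref{eq-quant}, put the iteration \eqref{eq-sa} exactly in the form covered by that theory. First I would rewrite \eqref{eq-sa} as $x^{k+1} = \Pi\big(x^k - \varepsilon^k \Psi(x^k) - \varepsilon^k B^k - \varepsilon^k \Delta M^k\big)$, so that the ``true'' driving term is $-\Psi$, and the perturbation splits into a bias part $\varepsilon^k B^k$ and a martingale‑difference part $\varepsilon^k \Delta M^k$. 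The projection $\Pi$ onto the closed convex set $\Uad\times\RR^d_+$ is handled by the reflection term $z$ appearing in \eqref{eq-ODEb}, which is precisely the object the projected ODE method is designed to track.

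Next I would verify the two perturbation conditions. For the martingale part, since $\Delta M^k$ is an $\Field^{k+1}$‑measurable martingale difference with $\EE(\|\Delta M^k\|^2\mid\Field^k)$ controlled by $V^k$ (modulo the usual bound of conditional by unconditional second moment, or by strengthening \eqref{eq-assump-c} to hold conditionally — the paper's $V^k$ is already defined as an expectation, so I would either read \eqref{eq-assump-c} as a statement about $\sup$ over a bounded region or invoke the a.s.\ boundedness of $\{x^k\}$ that the projection guarantees when $\Uad$ is bounded, and otherwise localize), the series $\sum_k \varepsilon^k \Delta M^k$ converges a.s.\ by the martingale convergence theorem applied to $\sum_k \varepsilon^k \Delta M^k$ using $\sum_k (\varepsilon^k)^2 V^k < \infty$. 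For the bias part, condition \eqref{eq-assump-b} says $\sum_k \varepsilon^k\|B^k\| < \infty$ a.s., so $\sum_k \varepsilon^k B^k$ is absolutely convergent a.s.; in particular the bias contributes an asymptotically negligible perturbation in the sense required by the ODE method (its tail sums vanish). Combined with \eqref{eq-assump-a}, which ensures the algorithm makes unbounded cumulative progress and hence ``sees'' the whole ODE flow, these are exactly the hypotheses under which the interpolated process $x^k$ converges to an invariant set of \eqref{eq-ODEb}; asymptotic equivalence to the ODE solution is the conclusion.

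The main obstacle I anticipate is \emph{not} the martingale estimate but the bookkeeping around boundedness and the reflection term. Two points need care: (i) the a.s.\ boundedness of $\{x^k\}$ — if $\Uad$ is bounded this is immediate from the projection and one only needs $\lambda^k$ bounded, which in the convex/saddle‑point setting follows as in \cite{ercohen}, but in general one must either assume it or supply a stability argument (e.g.\ a Lyapunov/stochastic‑boundedness argument), and I would state the boundedness as a running hypothesis inherited from the saddle‑point assumption; and (ii) showing that the reflection process $z$ is well defined and that the projected recursion's error term $\varepsilon^k z^k$ matches the $z\in C(x)$ of \eqref{eq-ODEb} — this is standard (Kushner–Yin handle it under convexity of the constraint set, which holds here since $\Uad\times\RR^d_+$ is convex) but must be cited precisely. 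Once those are in place, the conclusion is a direct invocation of the projected‑ODE convergence theorem, with \eqref{eq-assump-b} playing the role that would normally be played by ``$B^k\to 0$'' but in the stronger summable‑weighted form that lets the bias be absorbed rather than merely vanish.
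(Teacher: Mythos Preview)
Your proposal is correct and follows essentially the same approach as the paper: the paper's entire proof is the single sentence ``This result follows from \cite[Chap.~5]{KusYin:2003},'' and what you have written is precisely a sketch of how that citation is applied (decompose $\psi^k=\Psi(x^k)+B^k+\Delta M^k$, use \eqref{eq-assump-b} for the bias tail, \eqref{eq-assump-c} for the martingale tail, \eqref{eq-assump-a} for unbounded time, and the projected-ODE machinery for the reflection term). Your caveats about boundedness and the reflection bookkeeping are apt but go beyond what the paper itself supplies.
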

This result follows from \cite[Chap.~5]{KusYin:2003}. 

\begin{proposition}
Consider the case when the estimate~\eqref{eq-bias} (and possibly \eqref{eq-ther} too) is (are) used in the stochastic algorithm~\eqref{eq-ahs} (with \(\rho^k\) proportional to \(\varepsilon^k\)) with the following choices of the stepsize~\(\varepsilon^{k}\) and of the ``mollifier'' parameter~\(r^{k}\):
\begin{equation}
\label{eq-rn}
\varepsilon^{k} = k^{-\gamma}, \quad r^{k} = k^{-\beta/2}\,,
\end{equation}  
for \(\beta\) and \(\gamma\) positive.
Then the conditions of Lemma~\ref{lem-conv} are satisfied if\begin{equation}
\label{eq-conds}
\gamma\leq1, \quad\beta + \gamma > 1,\quad 2\gamma - \beta/2>1\,.
\end{equation}
\end{proposition}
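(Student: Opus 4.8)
The plan is to verify the three conditions \eqref{eq-assump} of Lemma~\ref{lem-conv} one by one, using the bias and variance estimates established in Theorem~\ref{th-1} together with the parametrization \eqref{eq-rn}. Condition \eqref{eq-assump-a}, namely \(\sum_k \varepsilon^k = \sum_k k^{-\gamma} = +\infty\), is the classical harmonic-type divergence and holds precisely when \(\gamma \le 1\); this accounts for the first inequality in \eqref{eq-conds}.

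Next I would handle the bias condition \eqref{eq-assump-b}. By Theorem~\ref{th-1}, the bias \(B^k\) of the gradient estimate \eqref{eq-bias} (and of \eqref{eq-ther} if that is used in \eqref{eq-ahs-b}) is of order \(\mathrm{O}((r^k)^2) = \mathrm{O}(k^{-\beta})\). Hence \(\varepsilon^k \|B^k\| = \mathrm{O}(k^{-\gamma-\beta})\), and \(\sum_k k^{-\gamma-\beta} < \infty\) iff \(\gamma + \beta > 1\), which is the second inequality in \eqref{eq-conds}. (Strictly speaking the constant in the \(\mathrm{O}\) is deterministic here, so the ``a.s.'' is automatic; one should note the bias bound is uniform in the relevant region where the iterates live, which is where a slight care is needed — see below.) Finally, for the variance condition \eqref{eq-assump-c}, Theorem~\ref{th-1} gives \(V^k = \mathrm{O}(1/r^k) = \mathrm{O}(k^{\beta/2})\), so \((\varepsilon^k)^2 V^k = \mathrm{O}(k^{-2\gamma + \beta/2})\), and \(\sum_k k^{-2\gamma+\beta/2} < \infty\) iff \(2\gamma - \beta/2 > 1\), the third inequality in \eqref{eq-conds}. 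Combining the three yields exactly \eqref{eq-conds}, and Lemma~\ref{lem-conv} then delivers the conclusion.

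The main obstacle I anticipate is not the arithmetic of the exponents but the justification that the \(\mathrm{O}(\cdot)\) bounds of Theorem~\ref{th-1} apply \emph{along the trajectory} of the algorithm with constants that can be taken uniform. Theorem~\ref{th-1} computes bias and variance at a fixed \(u\); to feed this into Lemma~\ref{lem-conv} one needs the smoothness hypotheses on the densities \(q_{X_u}\) and \(q_{X_u Y_u}\), and the implied bounds, to hold uniformly over a neighborhood (or bounded region) containing the iterates \(u^k\). This is where one would either invoke an a priori boundedness argument for \(\{u^k\}\) (\(\Uad\) may already be bounded, as in the running example, or one argues the iterates stay in a compact set a.s.), or strengthen the hypotheses to be uniform in \(u\). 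A secondary point worth a sentence: since \(\rho^k\) is taken proportional to \(\varepsilon^k\) and the \(\lambda\)-update uses \(\theta(u^{k+1},\xi^{k+1})-\alpha\) (or its smoothed analogue via \eqref{eq-ther}), one must check that the bias and variance contributed by the \(\lambda\)-component of \(\psi^k\) are no worse than those of the \(u\)-component — for the smoothed variant this is the \(\mathrm{O}(r^2)\)-bias, \(\mathrm{O}(1)\)-variance half of Theorem~\ref{th-1}, which is dominated by the estimates above, and for the unbiased variant \eqref{eq-puxi} there is no bias and bounded variance, so either way the \(u\)-component governs the conditions.
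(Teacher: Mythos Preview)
Your proposal is correct and follows essentially the same route as the paper: verify conditions \eqref{eq-assump-a}--\eqref{eq-assump-c} of Lemma~\ref{lem-conv} directly from the bias and variance orders of Theorem~\ref{th-1} under the parametrization~\eqref{eq-rn}, obtaining exactly the three inequalities in~\eqref{eq-conds}. The extra care you flag (uniformity of the \(\mathrm{O}\)-constants along the trajectory, and that the \(\lambda\)-component of \(\psi^k\) is dominated by the \(u\)-component) is not spelled out in the paper's proof but is a legitimate refinement rather than a departure.
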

\begin{proof}
The first condition~\eqref{eq-conds} is required by \eqref{eq-assump-a}. Theorem~\ref{th-1} states that the bias~\(B^{k}\) of AC estimates is in \(\mathrm{O}\big((r^k)^{2}\big)=\mathrm{O}\big(k^{-\beta}\big)\), hence \(\varepsilon^{k}\|B^{k}\| = O\big(k^{-( \beta+\gamma)}\big)\); therefore \eqref{eq-assump-b} is satisfied under the second condition~\eqref{eq-conds}. As for the variance~\(V^{k}\), it is in \(\mathrm{O}\big((r^{k})^{-1}\big)=\mathrm{O}\big(k^{\beta/2}\big)\) which yields $(\varepsilon^k)^2\, V^{k} = \mathrm{O}\big(k^{\beta/2-2 \gamma }\big)$; thus \eqref{eq-assump-c} is satisfied under the third condition~\eqref{eq-conds}. 
\end{proof}
\begin{proposition}
Consider the case when the estimate~\eqref{eq-findif} is  used in~\eqref{eq-ahs-a} (with \(\rho^k\) in \eqref{eq-ahs-b} proportional to \(\varepsilon^k\)) with the following choices of the stepsize~\(\varepsilon^{k}\) and of the FD parameter~\(c^{k}\):
\begin{equation}
\label{eq-cn}
\varepsilon^{k} = k^{-\gamma}, \quad c^{k} = k^{-\beta/2}\,,
\end{equation} 
for \(\beta\) and \(\gamma\) positive. 
Then the conditions of Lemma~\ref{lem-conv} are satisfied if, in addition of assumptions \emph{\textbf{(H1)}} and \emph{\textbf{(H2)}} of Theorem~\ref{theo-FD}, one has that
\begin{equation}
\label{eq-condd}
\gamma\leq1,\quad\beta + \gamma > 1,\quad
\begin{cases}
  2\gamma - \beta/2>1    & \text{if \emph{\textbf{(H3)}} is satisfied in Theorem~\ref{theo-FD}}, \\
  2\gamma - 3\beta/4>1    & \text{if \emph{\textbf{(H4)}} is satisfied in Theorem~\ref{theo-FD}}, \\
  2\gamma - \beta>1     & \text{otherwise}.
\end{cases}
\end{equation}
\end{proposition}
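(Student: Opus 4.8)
The plan is to imitate verbatim the structure of the proof of the preceding proposition: reduce the statement to a verification of the three hypotheses \eqref{eq-assump-a}--\eqref{eq-assump-c} of Lemma~\ref{lem-conv} for the compact iteration \eqref{eq-sa} associated with \eqref{eq-ahs}, and then split into three cases according to whether \textbf{(H3)}, \textbf{(H4)}, or neither holds in Theorem~\ref{theo-FD}, each case producing the corresponding line of \eqref{eq-condd}. A preliminary observation I would make is that the FD estimate \eqref{eq-findif} is used only in the primal update \eqref{eq-ahs-a}, whereas the dual update \eqref{eq-ahs-b} uses the exact (unbiased, bounded) indicator $\II\big(\ppp-\dots\big)$ coming from $\Theta=-P$; hence the $\lambda$-component of $\psi^k$ contributes no bias and only an $\mathrm{O}(1)$ variance, so both $B^k$ and $V^k$ are, up to dominated terms, governed by the primal component and therefore by the bias/variance estimates of Theorem~\ref{theo-FD}.

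Next I would dispatch the easy conditions. Condition \eqref{eq-assump-a} reads $\sum_k k^{-\gamma}=+\infty$, which holds exactly when $\gamma\le 1$, the first inequality of \eqref{eq-condd}. For \eqref{eq-assump-b}, Theorem~\ref{theo-FD} (whose hypothesis that $P$ is three times continuously differentiable with bounded derivatives makes the $\mathrm{O}$-constant uniform in the current iterate) gives $\|B^k\|=\mathrm{O}\big((c^k)^2\big)=\mathrm{O}\big(k^{-\beta}\big)$, hence $\varepsilon^k\|B^k\|=\mathrm{O}\big(k^{-(\beta+\gamma)}\big)$; since this is a deterministic bound, $\sum_k \varepsilon^k\|B^k\|<\infty$ a.s.\ once $\beta+\gamma>1$, the second inequality of \eqref{eq-condd}.

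The remaining work is condition \eqref{eq-assump-c}, and this is where the case split appears. Under \textbf{(H1)}--\textbf{(H3)}, Theorem~\ref{theo-FD} gives $V^k=\mathrm{O}\big((c^k)^{-1}\big)=\mathrm{O}\big(k^{\beta/2}\big)$, so $(\varepsilon^k)^2 V^k=\mathrm{O}\big(k^{\beta/2-2\gamma}\big)$, which is summable iff $2\gamma-\beta/2>1$. If \textbf{(H3)} is replaced by \textbf{(H4)}, then $V^k=\mathrm{O}\big((c^k)^{-3/2}\big)=\mathrm{O}\big(k^{3\beta/4}\big)$, giving $(\varepsilon^k)^2 V^k=\mathrm{O}\big(k^{3\beta/4-2\gamma}\big)$ and the condition $2\gamma-3\beta/4>1$. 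Finally, with no assumption beyond \textbf{(H1)}--\textbf{(H2)}, $V^k=\mathrm{O}\big((c^k)^{-2}\big)=\mathrm{O}\big(k^{\beta}\big)$, giving $2\gamma-\beta>1$. These are precisely the three branches of \eqref{eq-condd}; Lemma~\ref{lem-conv} then yields that $x^k$ has a.s.\ the same asymptotic behavior as the solution of \eqref{eq-ODE}, whence the conclusion.

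The only point that will demand genuine care, rather than the elementary series comparisons above, is the passage from the bias and variance estimates of Theorem~\ref{theo-FD} to bounds on $B^k$ and $V^k$ valid \emph{along the random trajectory}: one needs the $\mathrm{O}$-constants to be uniform over the region visited by $\{u^k\}$. Here I would rely on the fact that the hypotheses of Theorem~\ref{theo-FD} are already global (boundedness of $P'''$; the bound $K$ on $\theta'_{u_j}$ in \textbf{(H1)}; the density in \textbf{(H2)}; the non-degeneracy in \textbf{(H3)}/\textbf{(H4)} stated for all $u$), so the constants in that theorem do not depend on $u$, and, if one prefers to argue locally, on the boundedness of the primal iterates recalled in \S\ref{algorithm} for the convergent regime. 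Once this uniformity and the $\Field^{k+1}$-measurability bookkeeping of the estimators are in place, the proof is a direct application of Lemma~\ref{lem-conv}.
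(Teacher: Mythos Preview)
Your proposal is correct and follows exactly the approach the paper takes: the paper's proof is the single sentence ``The proof follows the same pattern as previously using the evaluations of Theorem~\ref{theo-FD}, the only changes concerning \(V^{k}\),'' and you have spelled out precisely that pattern, including the three-way case split on the variance order. Your additional remarks on the dual component contributing only $\mathrm{O}(1)$ variance and on the uniformity of the $\mathrm{O}$-constants along the trajectory are more careful than anything the paper makes explicit, but they do not deviate from its argument.
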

The proof follows the same pattern as previously using the evaluations of Theorem~\ref{theo-FD}, the only changes concerning \(V^{k}\).

\subsection{Convergence Rate}\label{sec-rate}
Let \(\beta\) and \(\delta\) be the integers such that:
\begin{equation}
\label{eq-betadelta}
B^{k} = \mathrm{O}(k^{-\beta}), \quad V^{k} = \mathrm{O}(k^{-\delta})\,. 
\end{equation}
Reference \cite{LecYin-1997} provides a comprehensive analysis of the convergence rates of algorithms of type \eqref{eq-sa} under the following assumption: close to its unique equilibrium point \(x^{\sharp}\) (supposed to lie in the interior of the convex set onto which \(\Pi\) is the projection), function \(\Psi\) admits the following representation:
\begin{equation}
\label{eq-A}
 \Psi(x) = A(x-x^{\sharp}) + \mathrm{O}(\|x-x^{\sharp}\|^2)\,,
\end{equation} 
where \(A\) is a matrix with  eigenvalues~\(\mu\) satisfying
\begin{equation}
\label{eq-lambdamin}
\overline{\mu}=\min\big(\mathrm{Re}(\mu)\big) > \begin{cases}0
      & \text{if \(\gamma<1\)}, \\
 \max \big(\beta, (1+\delta)/2\big) & \text{if \(\gamma=1\)}.
\end{cases} 
\end{equation}
Then, a direct application of Theorem 3.1 in \cite{LecYin-1997} gives the asymptotic mean square error (MSE) as a function of the algorithm parameters $\gamma, \beta, \delta$ and it states that:
\begin{equation}
\label{eq-rates}
\EE(x^k-x^{\sharp})^2 = \mathrm{O}(k^{-\kappa}), \quad \kappa = \min(2\beta, \gamma + \delta).
\end{equation}

Some comments are in order here regarding the application of this result to our situation. First, the authors of \cite{LecYin-1997} state than when \(x^{\sharp}\) lies on the boundary of the feasible convex set, other techniques (e.g.~large deviations) are required to establish convergence rates. In our case, we expect that the probability constraint is active at the optimum, hence the optimal dual variable should be strictly positive. In the example of \S\ref{sec-example}, we also have positivity constraints on primal variables and that on \(u\) (the first primal component) is active at the optimum (see \eqref{eq-sharp}). But it is felt that the projection is rather helpful in accelerating convergence for this component (see numerical results in the next section). We may consider that, asymptotically, \(u^{k}\) is ``frozen'' at~0 and does not participate to the dynamics of the algorithm ultimately.

Second, condition~\eqref{eq-lambdamin} may not be satisfied. We will come back on this point in the next subsection.  Nevertheless, we used the results of \cite{LecYin-1997} as guidelines for the choice  of parameters \(\beta\) and \(\gamma\) to drive the primal solution to its equilibrium in the most efficient way.

That said, in order to achieve the fastest convergence rate, one should seek to maximize $\kappa$ in \eqref{eq-rates} over the feasible set defined by \eqref{eq-conds} or \eqref{eq-condd} and the expression of \(\delta\) as a function of \(\beta\).
For the case of AC estimates, \(\delta=-\beta/2\), the minimum of \(2\beta\) and \(\gamma-\beta/2\) is obtained when those two functions are equal, which yields \(\beta=2\gamma/5\) and a value of \(4\gamma/5\); because of the first condition~\eqref{eq-conds}, the maximal possible value is obtained with \(\gamma=1\), which yields \(\beta=2/5\) and \(\kappa=4/5\), and we check that this pair \((\beta,\gamma)\) satisfies all conditions in \eqref{eq-conds}. Observe that our heuristic reasoning at the end 
of \S\ref{sec-gen} and \S\ref{sec-sec} in order to tune the parameter \(r\) when \(N\) i.i.d.~samples are available (here \(N\) is the iteration index~\(k\)) yields the same results (see \eqref{eq-mqe} in particular).

For the case of FD estimates, under assumption \textbf{(H3)} of Theorem~\ref{theo-FD}, the calculations and conclusions are the same. Under assumption~\textbf{(H4)}, \(\delta=-3\beta/4\) and the optimal values are \(\beta=4/11\), \(\gamma=1\), \(\kappa=8/11\) which is of course worse than the previous case. Finally, in the worst case for FD, we get \(\beta=1/3\), \(\gamma=1\), \(\kappa=2/3\).

The following result is a direct application of \cite[Th.~4.1 and 4.2]{LecYin-1997}. This CLT gives additional information on the asymptotic behavior of the iterates of~\eqref{eq-sa}.
\begin{theorem}
\label{thm-FCLT}
Consider algorithm~\eqref{eq-sa} with assumptions~\eqref{eq-A}, \eqref{eq-betadelta} and $\varepsilon^k=1/k$ (that is, \(\gamma=1\) in \eqref{eq-rn} or \eqref{eq-cn}). Let 
\[
X^{k} = k^{\kappa/2} (x^{k} - x^{\sharp})\,,\]
with \(\kappa\) as in \eqref{eq-rates}.
If $2\beta \ge 1+\delta$, then  as $k\to\infty$, 
\(
X^{k} - k^{\kappa/2-\beta} H_b \bar{B}
\)
converges in distribution towards a normal distribution of mean 0 and covariance~\(\Sigma\)
where:
\begin{align*}
& \bar{B} = \lim_{k\to\infty} k^\beta B^{k}\,,\\
& H_b = A - \beta I\,,\\
& H = A -\big((1+\delta)/2\big)I\,, \\
& R = \lim_{k\to\infty} k^\delta\,  \EE(\Delta M^{k} (\Delta M^{k})^{\top} \mid \Field^{k})\,,\\
& \Sigma H + H^{\top} \Sigma = R\,,
\end{align*}
where \(\null^{\top}\) denotes transposition.
\end{theorem}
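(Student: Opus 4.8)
The plan is to obtain the statement by translating Theorems~4.1 and~4.2 of \cite{LecYin-1997} into the present notation, once the iteration~\eqref{eq-sa} has been cast into the framework of that reference. First I would observe that, since the equilibrium $x^{\sharp}$ is assumed to lie in the interior of the feasible convex set, any trajectory converging to $x^{\sharp}$ eventually enters a neighbourhood on which the projection $\Pi$ is inactive; hence the reflection term $z$ in~\eqref{eq-ODEb} vanishes near $x^{\sharp}$ and, asymptotically, \eqref{eq-sa} reduces to an ordinary Robbins--Monro recursion
\[
x^{k+1}=x^{k}-\varepsilon^{k}\big(\Psi(x^{k})+B^{k}+\Delta M^{k}\big),\qquad \varepsilon^{k}=1/k ,
\]
with $\Psi$, $B^{k}$, $\Delta M^{k}$ as defined in~\eqref{eq-moyen}--\eqref{eq-quant}. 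As already noted in \S\ref{sec-rate}, the saturated primal coordinate is treated as frozen and dropped from the effective dynamics, so that the interiority hypothesis is satisfied by the remaining variables.

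Next I would verify, one by one, the hypotheses under which \cite[Th.~4.1 and~4.2]{LecYin-1997} apply: (i) the local linearisation~\eqref{eq-A} of $\Psi$ at $x^{\sharp}$ with a matrix $A$ whose eigenvalues satisfy the stability condition~\eqref{eq-lambdamin} in the case $\gamma=1$, namely $\overline{\mu}>\max(\beta,(1+\delta)/2)$; (ii) the bias estimate $B^{k}=\mathrm{O}(k^{-\beta})$ together with the existence of $\bar{B}=\lim_{k}k^{\beta}B^{k}$; (iii) the martingale-difference property of $\Delta M^{k}$ with respect to $\Field^{k}$ and $k^{\delta}\,\EE(\Delta M^{k}(\Delta M^{k})^{\top}\mid\Field^{k})\to R$, which in particular gives $V^{k}=\mathrm{O}(k^{-\delta})$ as in~\eqref{eq-betadelta}; and (iv) the choice $\varepsilon^{k}=1/k$. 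These are precisely the ingredients already used to derive the rate~\eqref{eq-rates}.

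With these in hand, the rate exponent is $\kappa=\min(2\beta,\gamma+\delta)=\min(2\beta,1+\delta)$; in the regime $2\beta\ge 1+\delta$ assumed in the statement we have $\kappa=1+\delta$, so the martingale noise is the term that governs the fluctuations, while the bias contributes only the deterministic drift $k^{\kappa/2-\beta}H_{b}\bar{B}$ with $H_{b}=A-\beta I$ (this drift is $\mathrm{O}(1)$ at the threshold $2\beta=1+\delta$ and $o(1)$ when $2\beta>1+\delta$). Theorems~4.1 and~4.2 of \cite{LecYin-1997} then give that $X^{k}=k^{\kappa/2}(x^{k}-x^{\sharp})$, corrected by this drift, converges in distribution to the centred Gaussian whose covariance $\Sigma$ is the unique solution of the Lyapunov equation $\Sigma H+H^{\top}\Sigma=R$ with $H=A-\big((1+\delta)/2\big)I$; the matrix $H$ is stable exactly because of~\eqref{eq-lambdamin}, which is what makes this equation uniquely solvable with a positive semidefinite $\Sigma$.

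The step I expect to be the main obstacle is hypothesis~(i): showing that the Arrow--Hurwicz vector field~\eqref{eq-moyen} is differentiable at $x^{\sharp}$ with a saddle-point-type linearisation $A$ whose eigenvalues have real parts exceeding $\max(\beta,(1+\delta)/2)$. As already stressed in \S\ref{sec-rate}, condition~\eqref{eq-lambdamin} need not hold in general, and $\Psi$ inherits only the smoothness of $\nabla_{u}\Theta=-\nabla P$, which is delicate near the critical level $\alpha$. Consequently this theorem is best read as conditional on~\eqref{eq-A}--\eqref{eq-lambdamin}, exactly like the preceding convergence-rate discussion, and once this is granted the remaining bookkeeping --- identifying $\bar{B}$, $R$, $H_{b}$, $H$ and $\Sigma$ with the quantities produced by the AC and FD analyses --- is routine.
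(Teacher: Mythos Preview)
Your proposal is correct and follows exactly the paper's own approach: the paper does not give an independent proof but simply states that the result ``is a direct application of \cite[Th.~4.1 and~4.2]{LecYin-1997}''. Your write-up is in fact more detailed than the paper's, spelling out how the projection becomes inactive near~$x^{\sharp}$, how the hypotheses of \cite{LecYin-1997} match conditions~\eqref{eq-A}--\eqref{eq-lambdamin} and~\eqref{eq-betadelta}, and why the regime $2\beta\ge 1+\delta$ makes the noise term dominant; this is all consistent with the surrounding discussion in \S\ref{sec-rate}--\S\ref{sec-arhu}.
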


\begin{remark}
From the definition of $H_b$ above and the appearance of $A -\big((1+\delta)/2\big)I$ in the definition of $\Sigma$, it is apparent that the strong stability condition~\eqref{eq-lambdamin} (case \(\gamma=1\)) ensures that both these matrices are positive definite, so that \(\Sigma\) is well defined. Indeed, with our choices, \(H_{b}\) and \(H\) are equal.
\end{remark}

\subsection{The Case of Arrow-Hurwicz Algorithms}\label{sec-arhu}
We know discuss the properties of matrix~\(A\) in the situation of Arrow-Hurwicz algorithms. This matrix has been introduced in \eqref{eq-A} in general,  and the operator \(\Psi\) is defined by~\eqref{eq-moyen} in our case. Thus, \(A\) is the linearized version of that \(\Psi\) at the equilibrium point~\(x^{\sharp}\), that is,
\begin{equation}
\label{eq-arr}
A=\begin{pmatrix}
\frac{\partial^{2}L(u^{\sharp},\lambda^{\sharp})}{\partial u^{2}}&\frac{\partial^{2}L(u^{\sharp},\lambda^{\sharp})}{\partial u\, \partial \lambda}\\
-\frac{\partial^{2}L(u^{\sharp},\lambda^{\sharp})}{\partial \lambda\,\partial u}&-\frac{\partial^{2}L(u^{\sharp},\lambda^{\sharp})}{\partial \lambda^{2}}
\end{pmatrix}=\begin{pmatrix}
J''(u^{\sharp})+\big(\lambda^{\sharp}\big)^{\top}\Theta''(u^{\sharp})&\big(\Theta'(u^{\sharp})\big)^{\top}\\
-\Theta'(u^{\sharp})&0
\end{pmatrix}
\end{equation}
However, among the constraints \(\Theta\), only those saturated (that is, satisfied with equality) at the equilibrium point should be taken into account together with their corresponding multipliers (that is, the non saturated constraints are virtually absent asymptotically).

Under the assumptions that the gradients of saturated constraints are linearly independent (or, otherwise stated, the operator in the upper right-hand corner of the matrix is injective), and that the Hessian of the Lagrangian (that is, the operator in the upper left-hand corner) is positive definite, it can easily be proved that the real part of the eigenvalues of~\(A\) are positive (see \cite[proof of Proposition~4.4.2]{berts}). This is condition~\eqref{eq-lambdamin} in the case \(\gamma<1\). When \(\gamma=1\), condition~\eqref{eq-lambdamin} is stronger and will be discussed shortly in the case of our example. Observe that if we assume that the only saturated dualized constraint is the probability constraint (which is the case in our example), then we should assume that the gradient of this probability function at the equilibrium is not zero.
 
Going back to example of \S\ref{sec-example}, matrix~\(A\) (restricted to the variables \((u,v,\lambda_{2})\)) is equal to
\begin{displaymath}
\begin{pmatrix}
0.944 &1.002 &-0.621\\
1.002&1.211&-1.181 \\
0.621 &1.181 & 0
\end{pmatrix}
\end{displaymath}
with eigenvalues \(0.974\pm 0.753~i\) and \(0.207\). As predicted, the real parts are positive but the smallest one is equal to \(0.207\) which is \emph{not} greater than \(2/5\). Thus, condition~\eqref{eq-lambdamin} (case \(\gamma=1\)) is not satisfied (with \(\beta=(1+\delta)/2= 2/5)\). However, in the same way as we ignored multipliers corresponding to non saturated constraints because they are stuck to 0 asymptotically, we may consider that the part~\(u\) of primal variables is ``out of the game'' ultimately because \(u\) is stuck to~\(0\) (the constraint \(u\geq 0\) is saturated) at the end of the transient part of the algorithm (remember that the ODE~\eqref{eq-ODEa} is to be replaced by the more complex dynamics~\eqref{eq-ODEb} when following boundaries of the admissible domain). Therefore, we consider a reduced matrix \(A\) by keeping only the \(2\times 2\) lower right-hand block (corresponding to the pair \((v,\lambda_{2})\)). The eigenvalues of this reduced matrix are \(0.605\pm  1.014~i\) and now condition~\eqref{eq-lambdamin} is satisfied even for the case \(\gamma=1\).
\section{Numerical Results}\label{section6}

Algorithm~\eqref{eq-ahs} has been used to solve the example of \S\ref{sec-example} with the AC and FD estimates. 
\begin{subequations}\label{eq-algoex}
\begin{align}
& u^{k+1}=\Pi_{U^{\mathrm{ad}}} \Big(
u^{k}-\varepsilon^{k}\big(\nabla_{u}j(u^{k},\xi^{k+1})-\estgrad{u}(u^{k},\xi^{k+1})\,\lambda^{k}\big) \Big)\,,\\
& \lambda^{k+1}=\Pi_{+}\Big(
\lambda^{k}+\rho^{k}\big(\ppp-\widehat{P}(u^{k+1},\xi^{k+1})\big) \Big)\,.
\end{align}
\end{subequations}
More precisely, for the AC method, \(\estgrad{u}(u,\xi)\) should be interpreted as the gradient estimate~\eqref{eq-bias}, applied to the example (see \eqref{eq-estuv}); \(\widehat{P}(u,\xi)\) is either \(p(u,\xi)\) as in \eqref{eq-puxi} or the biased estimate given by \eqref{eq-ther} (see Remark~\ref{rem-variante}). We tested both versions numerically and there was no significant difference. The estimate~\eqref{eq-ther} was retained for the rest of experiments. Of course, parameter \(r^k\) is adjusted according to the rule \(r^k = a k^{-1/5}\) where \(a\) is a positive constant to be tuned.

For the FD method, \(\estgrad{u}(u,\xi)\) is given by \eqref{eq-findif},  applied to the example. Again, parameter \(c^{k}\) is adjusted as \(c^k=b k^{-1/5}\) where \(b\) is a positive constant to be tuned. For \(\widehat{P}(u,\xi)\), we used~\eqref{eq-puxi}.

Numerical experiments are performed according to the following protocol:
 \begin{itemize}
\item all runs of the algorithms start from the same initial conditions:
\begin{displaymath}
u^{0}=0.2,\quad v^{0}=0.8,\quad \lambda_{1}^{0}=0.5, \quad\lambda_{2}^{0}=0.3\,.
\end{displaymath}
Recall that the solution is given by \eqref{eq-sharp} and all results will be expressed in terms of differences with those optimal values (hence the equilibrium point for all variables is at 0).
\item For AC and FD, 100 runs of the algorithms are performed using the same 100 sequences of pseudo-random numbers to generate Monte Carlo samples of \(\xi\) according to the distribution of this variable.
\item 5000 thousands iterations are performed for each run.
\item For AC and FD, averages of the differences \(x^{k}-x^{\sharp}\) are computed over the 100~runs together with their standard deviations. What will be shown on the plots are the trajectories of the ``average \(\pm\) standard deviation'' of those quantities as functions of the iteration index~\(k\).
\item The parameters \(a,b,d,e,f,g\) appearing in the following rules:
\begin{displaymath}
r^{k}=\frac{a}{k^{1/5}},\quad c^{k}=\frac{b}{k^{1/5}},\quad \varepsilon^{k}=\frac{d}{e+k},\quad \rho^{k}=\frac{f}{g+k},
\end{displaymath}
are tuned by some trials to try to obtain the ``best'' results for both methods.
\end{itemize}
\begin{figure}[hbtp]
   \centering
   \includegraphics[scale=0.45]{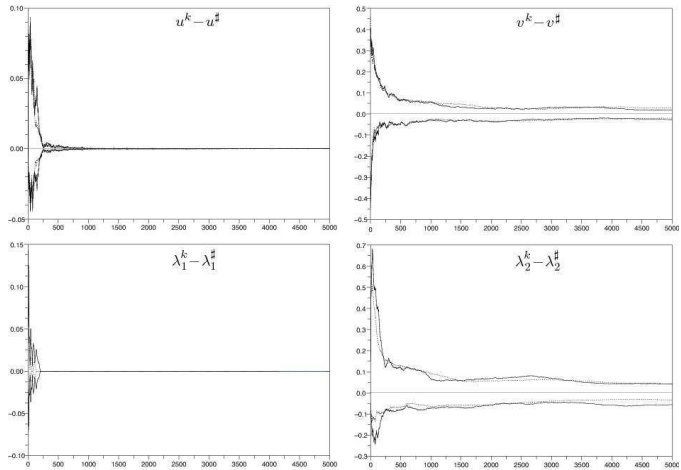}
      \caption{Average \(\pm\) standard deviation for AC (solid line) and FD (dotted line) algorithms}
   \label{fig-plot}
\end{figure}
Figure~\ref{fig-plot} shows the plots for the four variables and for the AC (continuous line) and DF (dotted line) methods. Again what is displayed is the ``average \(\pm\) standard deviation'' over 100~runs. Results obtained on this example are very close (with maybe a slight advantage to FD in the earliest iterations) with both methods. This confirm the estimation of variance and bias made with \emph{Mathematica} around the optimum for the estimates obtained with the two methods.

\section{Conclusions}
This paper discussed the problem of stochastic optimization under probability constraints and in particular methods for solving them numerically. Although there exist other ways of taking care of risk considerations in decision problems under uncertainty, we discussed the fact (\S\ref{sec-risk}) that probability constraints are sometimes the most straightforward way of expressing and quantifying risk in some circumstances. 

Unfortunately, as shown by the discussion and examples in \S\ref{section3}, probability constraints may be the source of several pathologies, and the loss of convexity is the most frequent one. Nevertheless, one must address the problem of numerical resolution with approaches which may fail in the worst cases but which may also succeed to solve nontrivial problems. Our strategy is based on duality and stochastic gradient algorithms. Duality, and the use of stochastic Arrow-Hurwicz algorithms, require the existence of a saddle point of the Lagrangian, which is not granted for the reasons advocated above. The use of augmented Lagrangians would certainly increase the chance of existence of saddle points but, in combination with stochastic algorithms, it raises new difficulties (namely, the operator of mathematical expectation would appear inside a nonlinear function). This new topic will be addressed in a forthcoming paper. 

Apart from this problem of saddle point existence, the search of this saddle point by stochastic gradient algorithms is made possible by expressing the probability constraint as an expectation involving a discontinuous function. In this paper, we proposed two ways to overcome this difficulty, and we studied the convergence and convergence rate of the resulting algorithms. The two methods provide \emph{biased} stochastic estimates of the constraint gradient. Although their implementation on a simple example showed a similar behavior, the theoretical results reveal that in more general situations, the ``mollifier'' (or ``Approximation by Convolution'' --- AC) method should be of more general use and robustness then the ``Finite Difference'' (FD) method. We defer to a forthcoming paper to propose other estimation techniques providing \emph{unbiased} estimates and based on techniques of integration by parts.

Still, the surface of this difficult field of numerical resolution of probability constrained stochastic optimization problems has been just scratched here, and several directions remain open for future investigations. For example, we have considered here only events (whose probability is constrained) which are described only by a scalar constraint and the case of events described by multidimensional constraints may raise new questions (although the techniques discussed in the present paper seem ready for an extension to this case).

\end{document}